\numberwithin{equation}{section}
\numberwithin{figure}{section}
\theoremstyle{plain}
\newtheorem{thm}{\protect\theoremname}
  \theoremstyle{plain}
  \newtheorem{lem}[thm]{\protect\lemmaname}
  \theoremstyle{plain}
  \newtheorem{prop}[thm]{\protect\propositionname}
  \theoremstyle{remark}
  \newtheorem{rem}[thm]{\protect\remarkname}
  \theoremstyle{plain}
  \newtheorem{cor}[thm]{\protect\corollaryname}
  \theoremstyle{definition}
  \newtheorem{defn}[thm]{\protect\definitionname}
   \providecommand{\fg}{\ifdim\lastskip>\z@\unskip\fi~\frqq}%
  \providecommand{\corollaryname}{Corollary}
  \providecommand{\definitionname}{Definition}
  \providecommand{\lemmaname}{Lemma}
  \providecommand{\propositionname}{Proposition}
  \providecommand{\remarkname}{Remark}
\providecommand{\theoremname}{Theorem}
\begin{document}
\global\long\def\EQ#1#2{\raisebox{+.65ex}{\ensuremath{#1}}/\raisebox{-.65ex}{\ensuremath{#2}}}

\title{Classification of absolutely dicritical foliations of cusp type.}

\author{Y. Genzmer\\
02/03/2012}
\begin{abstract}
We give a classification of absolutely dicritical foliation of cusp
type, that is, the germ of singularities of complex foliation in the
complex plane topologically equivalent to the singularity given by
the level of the meromorphic function $\frac{y^{2}+x^{3}}{xy}$.
\end{abstract}
\maketitle
An important problem of the theory of singularities of holomorphic
foliations in the complex plane is the construction of a geometric
interpretation of the so-called \emph{moduli of Mattei} of these foliations
\cite{univ}. These moduli appear when one considers a very special
kind of deformations called \emph{the unfoldings}. Basically, the
moduli of Mattei are precisely the moduli of germs of unfoldings of
a given singular foliation. One of the major difficulty one meets
looking at the mentionned geometric description is the lack of basic
examples in the litterature. Actually, except when the foliation is
given by the level of an holomorphic function, there exist none exemple.
The purpose of the following article is not to solve the problem of
Mattei even for the class of singularities we consider here but to
describe this one as accuratly as possible in order to prepare the
attack of the problem of moduli of Mattei. 

The absolutely dicritical foliations of cusp type are good candidates
to begin this study for the following reasons:
\begin{enumerate}
\item their\emph{ transversal structure}, which usually is a very rich dynamic
invariant \cite{Loray}, is very poor and can be completely understood.
\item their number of Mattei moduli is $1.$
\item the topology of their leaves is more or less trivial.
\end{enumerate}
Some results in the article might be quite easily extended to a larger
class of absolutely dicritical foliations up to some technical and
confusing additions. The risk would have been to miss the very first
objective of this paper, that is,\emph{ to give an example.} 

\bigskip{}

A germ of singularity of foliation $\mathcal{F}$ in $\left(\mathbb{C}^{2},0\right)$
is said to be \emph{absolutely dicritical} if there exists a sequence
of blowing-up $E$ such that $E^{*}\mathcal{F}$ is regular and transverse
to each irreducible component of the exceptionnal divisor $E^{-1}\left(0\right)$.
It is of \emph{cusp type} if two successive blowing-up are sufficient.
In that case the exceptionnal divisor $E^{-1}\left(0\right)$ is the
union of two irreducible components $\mathbb{P}_{1}\left(\mathbb{C}\right)$
of respective self-intersection $-2$ and $-1.$ We denote them respectively
$D_{2}$ and $D_{1}$. 

\noindent \begin{center}
\includegraphics[scale=0.6]{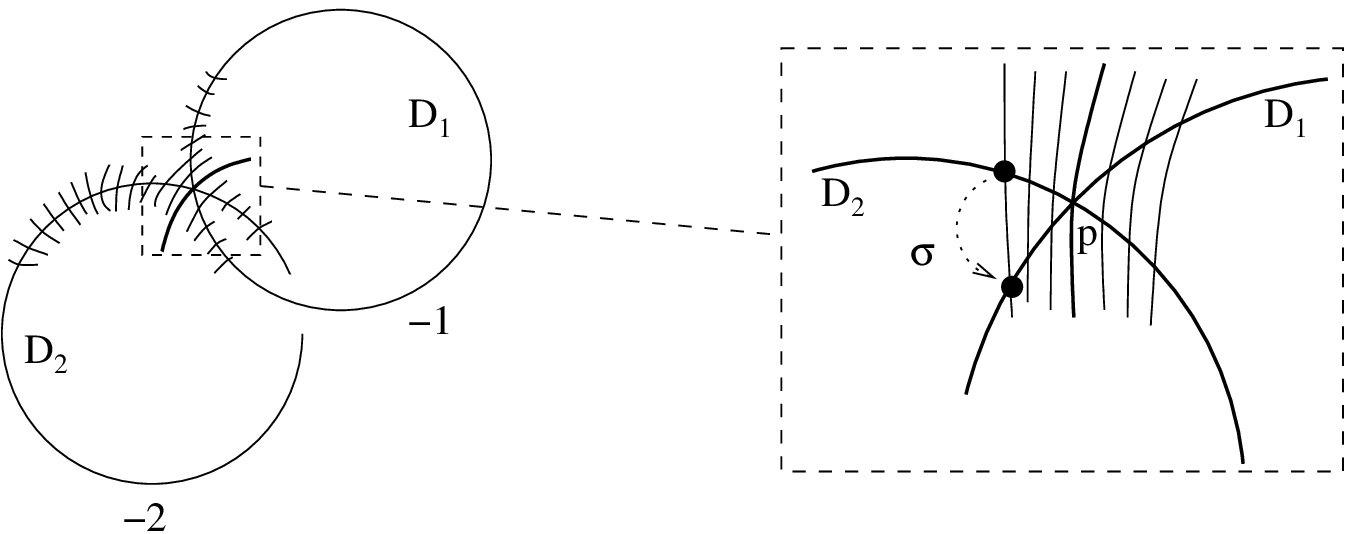}
\par\end{center}

The expression \emph{cusp type} insists on the fact that the special
leaf that passes trough the singular point of the divisor is analytically
equivalent to the cuspidal singularity $y^{2}+x^{3}=0.$ The simplest
example of an absolutely dicritical foliation is given by the levels
of the rationnal function near $\left(0,0\right)$ 
\[
f=\frac{y^{2}+x^{3}}{xy}.
\]

We associate to $\mathcal{F}$ a germ $\sigma\in\mbox{Diff}\left(\left(D_{2},p\right),\left(D_{1},p\right)\right)$
as in the picture above. It is defined by the property that $x\in D_{2}$
and $\sigma\left(x\right)\in D_{1}$ belongs to the same local leaf.
This germ is called \emph{the transversal structure} of $\mathcal{F}$.
This is the very first invariant of such a foliation. For the rationnal
function above, the transversal structure $\sigma$ reduces to the
identity map in the standard coordinates associated to $E$. 

\bigskip{}

The main result of this article is the following one: for any foliation
$\mathcal{F}$ that is absolutely dicritical of cusp type we consider
it topological class $\mbox{Top}\left(\mathcal{F}\right),$ that is
the set of all foliations topologically equivalent to $\mathcal{F}.$
The moduli space $\EQ{\mbox{Top}\left(\mathcal{F}\right)}{\sim}$
of $\mathcal{F}$ is defined as the quotient of $\mbox{Top}\left(\mathcal{F}\right)$
by the analytical equivalence relation. Now we have,
\begin{thm}
The class $\mbox{Top}\left(\mathcal{F}\right)$ is equal to the set
of all absolutely dicritical foliations and its moduli space $\EQ{\mbox{Top}\left(\mathcal{F}\right)}{\sim}$
can be identified with the functionnal space $\mathbb{C}\left\{ z\right\} $
up to the action of $\mathbb{C}^{*}$ defined by 
\[
\epsilon\cdot\left(z\mapsto f\left(z\right)\right)=\epsilon^{2}f\left(\epsilon z\right).
\]

\end{thm}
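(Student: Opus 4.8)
The proof splits into a topological statement and an analytic (moduli) statement, both read on the resolution $E$.

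\emph{Topological part.} I would first observe that being absolutely dicritical of cusp type is a topological invariant: the dual tree of the reduction, the self-intersections $-2,-1$, and the transversality of $E^{*}\mathcal{F}$ to the divisor are all preserved under a topological equivalence. This yields the inclusion $\mbox{Top}\left(\mathcal{F}\right)\subset\{\mbox{absolutely dicritical of cusp type}\}$. For the reverse inclusion I would show that any two such foliations are topologically equivalent. Away from the corner $p$, the lift $E^{*}\mathcal{F}$ is a regular fibration transverse to $D_{1}\cup D_{2}$, hence locally a trivial disc-fibration over each component; near $p$ the two fibrations are glued through the transversal structure $\sigma$. Since the reduction tree is fixed and the transverse fibration carries no further topological modulus, the gluing germ $\sigma$ being topologically inessential, one assembles a leaf-preserving homeomorphism of a neighbourhood of the divisor and pushes it down by $E$. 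Thus $\mbox{Top}\left(\mathcal{F}\right)$ is exactly the set of absolutely dicritical foliations of cusp type.

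\emph{The transversal structure is a complete analytic invariant.} The second step is to prove that $\mathcal{F}$ and $\mathcal{F}'$ are analytically equivalent if and only if their transversal structures are conjugate by an automorphism of the resolved configuration. One direction is immediate. For the other, given a conjugacy between the two germs $\sigma$, I would build a biholomorphism $\Psi$ of a neighbourhood of $D_{1}\cup D_{2}$ conjugating $E^{*}\mathcal{F}$ to $E^{*}\mathcal{F}'$: over each component minus $p$ the foliations are analytically trivial transverse fibrations, so $\Psi$ is transported along the leaves, and the prescribed conjugacy of $\sigma$ guarantees that the two local definitions match along the cuspidal leaf through $p$. Blowing $\Psi$ down produces the germ equivalence. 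I expect \textbf{the gluing across the corner $p$ to be the main obstacle}: this is precisely where the transverse fibration degenerates, and one must control the extension along the singular (cuspidal) leaf, presumably through a Mattei--Moussu type local normalisation together with a Hartogs or $H^{1}$-vanishing argument to globalise $\Psi$.

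\emph{Realisation and the $\mathbb{C}^{*}$-action.} Conversely I would realise every germ $\sigma$ by gluing two standard transverse fibrations on neighbourhoods of $D_{2}$ and $D_{1}$ with corner transition $\sigma$; the self-intersections $-2$ and $-1$ fix the normal-bundle cocycle, and one checks that the result is a genuine dicritical foliation with transversal structure $\sigma$. It then remains to compute the equivalence relation. In the standard chart of $E$ near $p$, with $t$ a coordinate on $D_{2}$ and $s$ on $D_{1}$, the model satisfies $\sigma=\mbox{id}$, and the automorphisms of $\left(\mathbb{C}^{2},0\right)$ preserving the cusp data act by $\sigma\mapsto\phi_{1}\circ\sigma\circ\phi_{2}^{-1}$, where $\phi_{2},\phi_{1}$ are the induced germs on $D_{2},D_{1}$. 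A jet computation shows that these are Moebius germs fixing $p$, and that the freedom they provide normalises $\sigma$ to the form $\sigma(z)=z+\sum_{n\geq3}\sigma_{n}z^{n}$ (the linear term reduced to $1$, the quadratic term to $0$), the residual group being the weighted homothety $\left(x,y\right)\mapsto\left(\alpha^{2}x,\alpha^{3}y\right)$ adapted to the cusp. Setting $f(z)=\sum_{m\geq0}\sigma_{m+3}z^{m}$ identifies the normalised moduli with $\mathbb{C}\{z\}$, and the residual action $\sigma_{n}\mapsto\alpha^{1-n}\sigma_{n}$ becomes, with $\epsilon=\alpha^{-1}$,
\[
\epsilon\cdot\left(z\mapsto f\left(z\right)\right)=\epsilon^{2}f\left(\epsilon z\right),
\]
which is the asserted action, completing the identification of $\EQ{\mbox{Top}\left(\mathcal{F}\right)}{\sim}$ with $\mathbb{C}\{z\}$ up to this $\mathbb{C}^{*}$-action.
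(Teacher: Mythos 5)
Your topological half is essentially the paper's own argument (model neighbourhoods of the two components plus extension of the germ $\sigma_0^{-1}\circ\sigma_1$ to a global homeomorphism of the divisor), and it is fine. The analytic half, however, rests on a claim that is false and that the paper explicitly warns against: you assert that the transversal structure, up to conjugation by automorphisms of the resolved configuration, is a \emph{complete} analytic invariant. It is not. The paper proves that $H^{1}\left(D,\Theta\right)\simeq\mathbb{C}$, so there is a one-dimensional space of infinitesimal unfoldings, and Proposition \ref{prop:Two-normal-forms} (together with the corollary on $\mathcal{M}=\EQ{\mathbb{C}}{H}$) shows that for generic $\sigma$ the foliations $\mathcal{F}_{\sigma,\alpha}$ and $\mathcal{F}_{\sigma,\alpha'}$ with $\alpha\neq\alpha'$ have identical transversal structure yet are \emph{not} analytically equivalent. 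Your construction of $\Psi$ breaks down exactly where you flagged the difficulty: the two local conjugacies, built over neighbourhoods of $D_{2}$ and of $D_{1}$, agree leaf-wise near the corner $p$ (that is what the conjugacy of $\sigma$ gives you) but not pointwise; the discrepancy is a germ of leaf-preserving automorphism near $p$, and the obstruction to absorbing it into automorphisms of the two model neighbourhoods is precisely a nontrivial class in $H^{1}\left(D,\Theta\right)$ --- the Mattei modulus $\alpha$. No Hartogs or $H^{1}$-vanishing argument can remove it, since that cohomology group does not vanish.

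This error then propagates into your normal-form count. Having discarded $\alpha$, you have nothing that forces the right-hand homography $h_{0}$ to be linear: after normalising $\sigma\left(z\right)=z+\sum_{n\geq3}\sigma_{n}z^{n}$, any homography $h_{0}$ fixing $0$ (with $h_{1}$ chosen to restore the normalisation) still preserves your normal form --- one checks it fixes $\sigma_{3}$ but moves $\sigma_{4}$ --- so the residual group is the full two-parameter group of homographies fixing the origin, not the one-parameter weighted homothety you describe. Your scheme would therefore identify the moduli space with $\mathbb{C}\left\{ z\right\} $ modulo a two-dimensional group, contradicting the statement itself. In the paper the reduction to the $\mathbb{C}^{*}$-action comes from the \emph{pair} $\left(\sigma,\alpha\right)$: the modulus $\alpha$ transforms inhomogeneously under $h_{0}$, via the term $-\frac{h_{0}^{''}\left(0\right)}{h_{0}^{'}\left(0\right)}$ in (\ref{eq:relation-fond}), so one can normalise $\alpha=\frac{3}{2}\frac{\sigma^{''}\left(0\right)}{\sigma^{'}\left(0\right)}$, after which the relation forces $h_{0}^{''}\left(0\right)=0$, i.e.\ $h_{0}$ linear; the Schwarzian derivative (not raw Taylor coefficients) then kills the remaining left freedom $h_{1}$ and turns $h_{0}\left(z\right)=\epsilon z$ into exactly the action $\epsilon^{2}f\left(\epsilon z\right)$. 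Without the unfolding theory (semi-universality of $\mathcal{F}_{\sigma,\alpha}$, the normalisation theorem, and Proposition \ref{prop:Two-normal-forms}), the identification you aim for cannot be reached.
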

In this theorem, the germ of convergent series $f$ is the image of
the transversal structure $\sigma$ by the Schwarzian derivative $S\left(\sigma\right)=\frac{3}{2}\left(\frac{\sigma^{'''}}{\sigma^{'}}\right)-\left(\frac{\sigma^{''}}{\sigma^{'}}\right)^{2}.$
A quick lecture of the theorem would suggest that the transversal
structure $\sigma$ is the sole invariant of the foliation, which
is not exactly true as it is highlighted in theorem (\ref{prop:Two-normal-forms}). 

\bigskip{}

We have to mention that it does exists \emph{a lot of absolutely dicritical
foliation}s. Following a result due to F. Cano and N. Corral \cite{Caco},
the process $E$ does not contain any obstruction to the existence
of absolutely dicritical foliations. In other words, for any sequence
of blowing-up $E,$ there exists an absolutely dicritical foliation
whose associated process of blowing-ups is exactly $E$.

\section{Topological classification. }

The topological classification is\emph{ trivial} as stated in a proposition
to come in the sense that two absolutely dicritical foliations of
cusp type are topologically equivalent. To prove this fact, we describe
below the \emph{model foliations} from which the absolutely dicritical
foliations are build.

\subsection{Model foliations.}

Let us consider the following model foliations 
\begin{itemize}
\item $\mathcal{F}_{2}$ is given by the gluing of two copies of $\mathbb{C}^{2}$
\[
\mathbb{C}^{2}=\left(x_{1},y_{1}\right)\qquad\mathbb{C}^{2}=\left(x_{2},y_{2}\right)
\]
glued by $x_{2}=\frac{1}{y_{1}}$ and $y_{2}=y_{1}^{2}x_{1}$ whose
the neighborhood of $x_{1}=y_{2}=0$ is transversaly foliated by $y_{1}=cst$
and $x_{2}=cst$. Topologically, this is a foliated neighborhood of
a Riemann surface of genus $0$ whose self-intersection is $-2$. 
\item $\mathcal{F}_{1}$ is given by the gluing of two copies of $\mathbb{C}^{2}$
\[
\mathbb{C}^{2}=\left(x_{3},y_{3}\right)\qquad\mathbb{C}^{2}=\left(x_{4},y_{4}\right)
\]
 glued by $x_{4}=\frac{1}{y_{3}}$ and $y_{4}=y_{3}x_{3}$ whose the
neighborhood of $x_{3}=y_{4}=0$ is transversaly foliated by $y_{3}=cst$
and $x_{4}=cst$. Topologically, this is a foliated neighborhood of
a Riemann surface of genus $0$ whose self-intersection is 1. 
\end{itemize}
Following \cite{CamaMovaSad}, any neighborhood of a Riemann surface
$A$ of genus $0$ embedded in a manifold of dimension two with $A\cdot A=-2$
(resp. $-1$) and foliated by a transverse codimension $1$ foliation
is equivalent ot $\mathcal{F}_{2}$ (resp. $\mathcal{F}_{1}$). From
this, it is easy to show that any $\left(\mathcal{C}^{0},\mathcal{C}^{\infty},\mathcal{C}^{\omega}\right)-$isomorphism
between two Riemann surfaces $A_{1}$ and $A_{2}$ as before can be
extended in a neigborhood of $A_{1}$ and $A_{2}$ as a $\left(\mathcal{C}^{0},\mathcal{C}^{\infty},\mathcal{C}^{\omega}\right)-$
conjugacy of the foliations.

\subsection{Topological classification.}

Let us first recall the following lemma:
\begin{lem}
Let $\sigma$ be a germ in $\mbox{Diff}\left(\mathbb{P}^{1},a\right)$,
i.e., a germ of automorphism of a neighborhood of $a$ in $\mathbb{P}^{1}.$
Then there exists $h$ a global homeomorphism of $\mathbb{P}^{1}$
such that $h$ and $\sigma$ coincide in a neighborhood of $a$. \end{lem}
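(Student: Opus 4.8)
The plan is to cut the sphere along a small circle around $a$ into two closed topological disks, to keep $\sigma$ itself on the disk containing $a$, and to build a compatible homeomorphism on the complementary disk by means of the Schoenflies theorem; gluing the two pieces then yields the global homeomorphism $h$.

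First I would shrink the domain of $\sigma$. Since $\sigma$ is a germ of biholomorphism, after fixing a holomorphic coordinate $z$ centered at $a$ I can choose a radius $\rho>0$ so small that $\sigma$ is defined and injective on the closed round disk $\overline{D}=\{|z|\le\rho\}$. Its image $\Omega=\sigma(D)$ is then an open topological disk bounded by the Jordan curve $\gamma=\sigma(\partial D)$, and $\sigma$ restricts to a homeomorphism $\overline{D}\to\overline{\Omega}$. In particular $\sigma$ is already a homeomorphism on all of $\overline{D}$, so no interpolation is needed there: I will simply set $h=\sigma$ on $\overline{D}$, which certainly realises the required coincidence in a neighborhood of $a$.

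Next I would treat the two complementary regions. Because $D$ is an open round disk, $D^{*}:=\mathbb{P}^{1}\setminus D$ is a closed topological disk with boundary $\partial D$. On the target side, the Jordan curve $\gamma$ separates $\mathbb{P}^{1}$ into two open topological disks by the Jordan--Schoenflies theorem; one of them is $\Omega$, and I denote by $\Omega^{*}$ the closure of the other, so that $\Omega^{*}$ is a closed disk with boundary $\gamma$. The germ supplies a homeomorphism $\sigma|_{\partial D}\colon\partial D\to\gamma$ between the two boundary circles. The key step is to extend this boundary homeomorphism to a homeomorphism $g\colon D^{*}\to\Omega^{*}$ of the complementary closed disks; this is precisely what the Schoenflies theorem guarantees (equivalently, after uniformising both complementary disks to the standard disk by the Riemann mapping theorem, one applies the Alexander coning trick to the induced circle homeomorphism).

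Finally I would glue. The maps $\sigma$ on $\overline{D}$ and $g$ on $D^{*}$ agree on the shared boundary $\partial D$, so by the pasting lemma they define a continuous map $h\colon\mathbb{P}^{1}\to\mathbb{P}^{1}$; it is a bijection because it is bijective on each of the two pieces $\overline{D}\to\overline{\Omega}$ and $D^{*}\to\Omega^{*}$, these pieces covering source and target and overlapping only along $\partial D\to\gamma$. Since $\mathbb{P}^{1}$ is compact Hausdorff, a continuous bijection is automatically a homeomorphism, and $h=\sigma$ on $\overline{D}$ completes the argument. The only genuinely nontrivial input is the extension step, i.e. the Schoenflies theorem; everything else is elementary point-set topology.
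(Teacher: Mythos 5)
Your proof is correct, but it takes a genuinely different route from the paper's. The paper also keeps $\sigma$ on a small disk around $a$, but instead of filling in the whole complementary disk it interpolates back to the identity: it chooses a larger circle $S_2$ whose disk contains both $S_1$ and $\sigma(S_1)$, constructs a homeomorphism $\tilde{h}$ from the corona between $S_1$ and $S_2$ to the corona between $\sigma(S_1)$ and $S_2$ with $\tilde{h}|_{S_1}=\sigma$ and $\tilde{h}|_{S_2}=\mathrm{Id}$, and then sets $h=\mathrm{Id}$ outside $S_2$. The key input there is an annulus homeomorphism with \emph{both} boundary values prescribed, which implicitly requires the two boundary maps to be isotopic (both orientation-preserving --- automatic here because $\sigma$ is holomorphic) and still rests on Schoenflies to identify the region between $\sigma(S_1)$ and $S_2$ as a corona. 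Your two-piece decomposition is more economical: a single application of the strong Schoenflies extension theorem (a homeomorphism of Jordan curves extends over the complementary closed disks, e.g.\ by coning, with no orientation check needed), plus the standard continuous-bijection-on-a-compactum argument. What the paper's version buys in exchange is a stronger conclusion: its $h$ is the identity outside a compact neighborhood of $a$, and it is precisely this ``interpolate to the identity across an annulus'' technique that is reused, in leaf-preserving form, in the proof of the subsequent proposition on topological equivalence; your $h$ carries no control far from $a$. One cosmetic point: your parenthetical reduction to the round disk needs Carath\'eodory's boundary-extension theorem on top of the Riemann mapping theorem (the conformal uniformization of the interior extends homeomorphically to the closure because the boundary is a Jordan curve); ``Riemann mapping plus Alexander coning'' alone is not quite enough.
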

\begin{proof}
Let $S_{1}$ be a small circle around $a$ in a domain where $\sigma$
is defined. Its image $\sigma\left(S_{1}\right)$ is a topological
circle. Consider $S_{2}$ a second circle such that the disc bounded
by $S_{2}$ contains $S_{1}$ and $\sigma\left(S_{1}\right).$ The
two coronas bounded respectively by $S_{1}$ and $S_{2}$ and $\sigma\left(S_{1}\right)$
and $S_{2}$ are homeomorphic. Actually, there exists an homeomorphism
$\tilde{h}$ of the two coronas such that 
\begin{eqnarray*}
\left.\tilde{h}\right|_{S_{2}} & = & \mbox{Id}\\
\left.\tilde{h}\right|_{S_{1}} & = & \sigma.
\end{eqnarray*}
Therefore, we can define the homeomorphism $h$ the following way:
in the disc bounded by $S_{1}$, we set $h=\sigma;$ in the corona
bounded by $S_{1}$ and $S_{2}$, $h=\tilde{h}$; everywhere else
we set $h=\mbox{Id}$. Clearly, $h$ satifies the properties in the
lemma.\end{proof}
\begin{prop}
Two absolutely dicritical foliations of cusp type are topologically
equivalent. The class $\mbox{Top}\left(\mathcal{F}\right)$ is equal
to the set of all absolutely dicritical foliations. \end{prop}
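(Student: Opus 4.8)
The plan is to work upstairs, after the two blowing-ups, and to construct a topological conjugacy between the now regular foliations $E^{*}\mathcal{F}$ and $E'^{*}\mathcal{F}'$ that preserves the total exceptional divisor $D_{2}\cup D_{1}$; contracting this divisor by $E$ and $E'$ then produces the desired homeomorphism of $\left(\mathbb{C}^{2},0\right)$ conjugating $\mathcal{F}$ to $\mathcal{F}'$. Since $\mathcal{F}$ and $\mathcal{F}'$ are topologically equivalent if and only if their lifts are, and since every foliation in this family is absolutely dicritical of cusp type, this simultaneously identifies $\mbox{Top}\left(\mathcal{F}\right)$ with the whole family. The conjugacy upstairs will be assembled from two pieces, one defined near $D_{2}$ and one near $D_{1}$, each produced by the transverse-neighbourhood classification recalled above.

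I identify the divisors of $\mathcal{F}$ and $\mathcal{F}'$ through the standard coordinates furnished by the models $\mathcal{F}_{2}$ and $\mathcal{F}_{1}$, so that $D_{2}$, $D_{1}$ are common copies of $\mathbb{P}^{1}$ meeting at the corner $p$. Let $\sigma$ and $\sigma'$ be the transversal structures of $\mathcal{F}$ and $\mathcal{F}'$. I first fix the identifications of the divisor components: on $D_{2}$ I take $h_{2}=\mbox{Id}$, and on $D_{1}$ I prescribe, near $p$, the germ $h_{1}=\sigma'\circ\sigma^{-1}\in\mbox{Diff}\left(\mathbb{P}^{1},p\right)$, which is exactly the compatibility relation $h_{1}\circ\sigma=\sigma'\circ h_{2}$ forcing the two pieces to send leaves to the same leaves near $p$. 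By the preceding lemma this germ extends to a global homeomorphism $h_{1}$ of $D_{1}$. Because $D_{2}$ has self-intersection $-2$, $D_{1}$ has self-intersection $-1$, and the ambient foliations are regular and transverse along each component, the classification of \cite{CamaMovaSad} and the ensuing extension principle apply: the homeomorphisms $h_{2}$ and $h_{1}$ extend respectively to topological conjugacies $H_{2}$ between $E^{*}\mathcal{F}$ and $E'^{*}\mathcal{F}'$ on a neighbourhood of $D_{2}$, and $H_{1}$ on a neighbourhood of $D_{1}$. A direct computation using the definition of $\sigma$ and $\sigma'$ shows that $H_{2}$ and $H_{1}$ both restrict to $\mbox{Id}$ on $D_{2}$ and to $h_{1}$ on $D_{1}$ in a neighbourhood of $p$; in particular they coincide along the divisor near the corner.

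It remains to glue $H_{2}$ and $H_{1}$ into a single homeomorphism, and this fibrewise patching near the corner $p$ is the main difficulty. On a small bidisc $B$ around $p$ choose coordinates $\left(u,v\right)$ with $D_{2}=\left\{ u=0\right\}$ and $D_{1}=\left\{ v=0\right\}$; the regular foliation $E^{*}\mathcal{F}$ is then a fibration transverse to both axes, with one-dimensional leaf space $T$. By the choice of $h_{1}$, the maps $H_{2}$ and $H_{1}$ induce the \emph{same} map $T\to T'$ on leaf spaces, so over each $t\in T$ they give two homeomorphisms of the corresponding leaf onto one common target leaf which already agree where the leaf meets the divisor. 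I then interpolate between $H_{2}$ and $H_{1}$ leafwise, using a cut-off in the transverse direction together with an Alexander-type isotopy inside each leaf-disc, to obtain a homeomorphism $H$ on $B$ equal to $H_{2}$ near $D_{2}\cap B$ and to $H_{1}$ near $D_{1}\cap B$. Setting $H=H_{2}$ off a neighbourhood of $D_{1}$, $H=H_{1}$ off a neighbourhood of $D_{2}$, and $H$ as just constructed on $B$, yields a global leaf-preserving homeomorphism carrying $D_{2}\cup D_{1}$ to itself, which descends through the contractions to the sought equivalence between $\mathcal{F}$ and $\mathcal{F}'$. The points I would still have to verify carefully are that the leafwise interpolation can be matched with the transverse cut-off so as to remain a homeomorphism up to the divisor, and that the resulting map stays continuous across the contracted point; both follow from the product structure of the transverse fibration near $p$ and from the fact that $H_{1}$ and $H_{2}$ already agree on the divisor there.
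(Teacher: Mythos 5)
Your construction addresses only the first assertion of the proposition, and for that part it follows essentially the same route as the paper: extend the compatibility germ $\sigma'\circ\sigma^{-1}$ on the divisor to a global homeomorphism by the corona lemma, produce local topological conjugacies $H_{2}$, $H_{1}$ near $D_{2}$ and $D_{1}$ from the transverse-fibration models of \cite{CamaMovaSad}, and kill the mismatch of the two conjugacies near the corner by a leafwise interpolation supported near $p$. The paper packages this last step slightly differently --- it extends the leaf-preserving germ $H_{1}\circ H_{0}^{-1}$ to a global leaf-preserving homeomorphism $H$ of a neighbourhood of $D_{1}$, replaces $H_{1}$ by $H^{-1}\circ H_{1}$, and then the two pieces agree identically near $p$ and glue --- but the underlying idea (fibrewise corona/Alexander interpolation using the local product structure) is the same. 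Up to technical points that both you and the paper leave implicit (for instance, the extension principle of \cite{CamaMovaSad} does not by itself guarantee that $H_{2}$ sends $D_{1}$ into $D_{1}$ near the corner; this must be arranged, otherwise the restriction of $H_{2}$ to $D_{1}$ that your ``direct computation'' uses is not even defined), this half is sound.

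The genuine gap is the second assertion. You claim that your construction ``simultaneously identifies $\mbox{Top}\left(\mathcal{F}\right)$ with the whole family'', but it does not: what you prove is one inclusion only, namely that every absolutely dicritical foliation of cusp type lies in $\mbox{Top}\left(\mathcal{F}\right)$. The stated equality also requires the converse: if $\mathcal{G}$ is \emph{any} foliation topologically equivalent to $\mathcal{F}$, then $\mathcal{G}$ is itself absolutely dicritical of cusp type. This cannot follow from a gluing construction performed on foliations already assumed to be of that type; a priori $\mbox{Top}\left(\mathcal{F}\right)$ could contain a foliation whose reduction of singularities is completely different. The paper closes this by invoking the theorem of Camacho--Lins Neto--Sad \cite{camacho}: the process of reduction of singularities is a topological invariant, so a foliation topologically equivalent to $\mathcal{F}$ has the same reduction tree, and since the dicritical components are also shared under a topological equivalence, such a foliation is absolutely dicritical of cusp type. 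Without this invariance statement (or an equivalent substitute), the second sentence of the proposition remains unproved in your proposal.
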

\begin{proof}
Let us consider $\mathcal{F}_{0}$ and $\mathcal{F}_{1}$ two absolutely
dicritical foliations of cusp type. Applying if necessary a linear
change of coordinates to $\mathcal{F}_{0}$ for instance, we can suppose
that both foliations are reduced by exactly the same sequence of two
blowing-ups $E.$ Let us write $E^{-1}\left(0\right)=D_{2}\cup D_{1}$
and $D_{2}\cap D_{1}=\left\{ p\right\} .$ Let us consider $\sigma_{0}$
and $\sigma_{1}$ in $\mbox{Diff}\left(\left(D_{2},p\right),\left(D_{1},p\right)\right)$
the transversal structures of $\mathcal{F}_{0}$ and $\mathcal{F}_{1}.$
According to the previous lemma, there exist $h$ an homeomorphism
of $D_{2}$ such that $h=\sigma_{0}^{-1}\circ\sigma_{1}$ in a neighborhood
of $p$ in $D_{2}.$ Since, along $D_{2}$ or $D_{1}$ the foliations
are transverse, there exist two homeomorphisms $H_{0}$ and $H_{1}$
defined respectively in a neighborhood of $D_{2}$ and $D_{1}$ such
that 
\[
H_{0}^{*}\left(E^{*}\mathcal{F}_{0}\right)=E^{*}\mathcal{F}_{1}\quad H_{1}^{*}\left(E^{*}\mathcal{F}_{0}\right)=E^{*}\mathcal{F}_{1}
\]
 and $\left.H_{0}\right|_{D_{2}}=\mbox{Id}$ and $\left.H_{1}\right|_{D_{1}}=h$.
Since $h=\sigma_{0}^{-1}\circ\sigma_{1}$, the automorphism $H_{1}\circ H_{0}^{-1}$
of $E^{*}\mathcal{F}_{0}$ let invariant each leaf of $E^{*}\mathcal{F}_{0}$
. Now, adapting the argument of the previous lemma yields the existence
of $H$ a global homeomorphism of $E^{*}\mathcal{F}_{0}$ defined
in a neighborhood of $D_{1}$ letting invariant each leaf such that
$H$ and $H_{1}\circ H_{0}^{-1}$ coincide in a neighborhood of $p$.
Thus $\left(H^{-1}\circ H_{1}\right)\circ H_{0}^{-1}$ is equal to
$\mbox{Id}$ in a neighborhood of $p.$ Therefore the collection $H^{-1}\circ H_{1}$
and $H_{0}$ glue in a global homeomorphism between $E^{*}\mathcal{F}_{0}$
and $E^{*}\mathcal{F}_{1}$. This homeomorphism can be blown down
in a neighborhood of $\mathbb{C}^{2}$ and is a $\mathcal{C}^{0}$-
conjugacy of the foliations $\mathcal{F}_{0}$ and $\mathcal{F}_{1}.$ 

Now, if $\mathcal{F}_{0}$ is topologically equivalent to an absolutely
dicritical foliation of cusp type, a theorem of C. Camacho and A.
Lins Neto and P. Sad \cite{camacho} ensures that the process of reduction
of $\mathcal{F}_{0}$ is the one of an absolutely dicritical foliation.
Since, they also shared the same dicritical components, $\mathcal{F}_{0}$
is absolutely dicritical of cusp type. 
\end{proof}

\section{Moduli space. }

Consider a germ of biholomorphism $\phi$ written in the coordinates
of the model foliations 
\[
\left(x_{3},y_{3}\right)=\phi\left(x_{1},y_{1}\right),\qquad\phi\left(0,0\right)=\left(0,0\right).
\]
Suppose that it send the foliation defined by $y_{1}=cst$ to the
one defined by $y_{3}=cst$ and that the curve $x_{1}=0$ is send
to a curve transverse to $x_{3}=0$. With such a biholomorphism we
can consider the foliation obtained by gluing the two models foliations
$\mathcal{F}_{2}$ and $\mathcal{F}_{1}$ with the application $\phi$

\noindent \begin{center}
\includegraphics[scale=0.4]{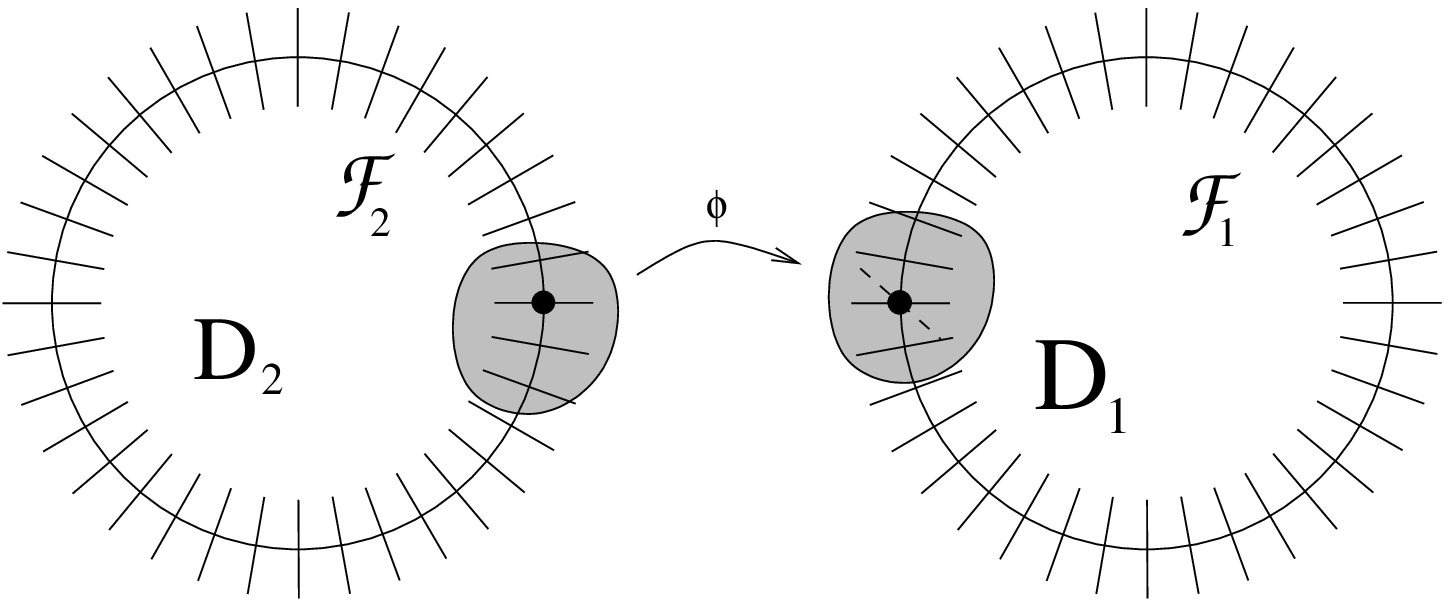}
\par\end{center}

Following a classical result due to Grauert, this gluing is analytically
equivalent to the neighborhood of the exceptionnal divisor obtained
by a standard process of two successive blowing-ups \cite{grauerthans}.
The obtained foliation can be blown down in an absolutely dicritical
foliation of cusp type at the origin of $\mathbb{C}^{2}.$ 
\begin{rem}
\emph{\label{Key-remark.-Two}Key remark.} Two foliations obtained
by such an above gluing with the respective biholomorphisms $\phi$
and $\psi$ are analytically equivalent if and only if there exists
an automorphism $\Phi_{2}$ of the foliation $\mathcal{F}_{2}$ and
$\Phi_{1}$ of the foliation $\mathcal{F}_{1}$ such that 
\[
\phi=\Phi_{1}\circ\psi\circ\Phi_{2}.
\]

\end{rem}
Let us fix $\sigma\in\mbox{Diff}\left(\mathbb{C},0\right)$. We consider
the following biholomorphisms 
\[
g_{\sigma}\left(x_{1},y_{1}\right)=\left(x_{1}+\sigma\left(y_{1}\right),\sigma\left(y_{1}\right)\right)\mbox{ and }\Phi_{\alpha}\left(x_{1},y_{1}\right)=\left(x_{1}\left(1+\alpha y_{1}\right),y_{1}\right)
\]

The composition $g_{\sigma}\circ\phi_{\alpha}$ send the foliation
$y_{1}=cst$ on it-self. Thus we can denote by $\mathcal{F}_{\sigma,\alpha}$
the foliation obtained by the above gluing

\[
\mathcal{F}_{\sigma,\alpha}:=\EQ{\mathcal{F}_{1}\coprod\mathcal{F}_{2}}{p\sim g_{\sigma}\circ\Phi_{\alpha}\left(p\right)}.
\]

Now, moving the parameter $\alpha$, we obtain an analytical family
of absolutely dicritical foliations. Actually, the following property
holds.
\begin{thm}
The germ of deformation $\left(\mathcal{F}_{\sigma,\alpha}\right)_{\alpha\in\left(\alpha_{0},\mathbb{C}\right)}$
for $\alpha$ in a neighborhood of $\alpha_{0}$ in $\mathbb{C}$
is a germ of equisingular semi-universal unfolding of $\mathcal{F}_{\sigma,\alpha_{0}}$
in the sense of Mattei \cite{univ}. In particular, for any germ of
equisingular unfolding $\left(\mathcal{F}_{t}\right)_{t\in\left(\mathbb{C}^{p},0\right)}$
with $p$ parameters such that $\left.\mathcal{F}_{t}\right|_{t=0}\sim\mathcal{F}_{\sigma,\alpha_{0}}$
there exists a map $\alpha:\left(\mathbb{C},\alpha_{0}\right)\to\left(\mathbb{C}^{p},0\right)$
such that for all $t$ $\mathcal{F}_{t}\sim\mathcal{F}_{\sigma,\alpha\left(t\right)}$. 
\end{thm}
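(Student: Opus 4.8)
The plan is to reduce the semi-universality to a single cohomological computation, following the general theory of Mattei \cite{univ}, and then to carry out that computation explicitly on the gluing model. Recall that an equisingular unfolding is semi-universal as soon as its Kodaira--Spencer map at the central parameter $\alpha_{0}$ is an isomorphism onto the space $T^{1}$ of infinitesimal equisingular unfoldings of $\mathcal{F}_{\sigma,\alpha_{0}}$; the existence of the classifying map $t\mapsto\alpha(t)$ inducing an arbitrary unfolding is then exactly the completeness part of Mattei's theorem. Two things therefore remain to be established: first, that $\left(\mathcal{F}_{\sigma,\alpha}\right)_{\alpha}$ is genuinely an equisingular unfolding, which is immediate from the construction since the reduction process $E$ is independent of $\alpha$; and second, that its Kodaira--Spencer map is bijective.

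The first move is to identify $T^{1}$ with a first cohomology group. After reduction the foliation $E^{*}\mathcal{F}_{\sigma,\alpha_{0}}$ is regular and transverse to $D=D_{2}\cup D_{1}$, so that the infinitesimal equisingular unfoldings are governed by the sheaf $\Theta$ of germs, along $D$, of vector fields tangent to $E^{*}\mathcal{F}_{\sigma,\alpha_{0}}$ and compatible with $D$ (it is this last condition that encodes equisingularity), and $T^{1}\cong H^{1}\left(\mathcal{V},\Theta\right)$ for $\mathcal{V}$ a neighborhood of $D$. I would compute this group through the two--chart covering that is built into the gluing construction, namely the neighborhoods $U_{2}$ and $U_{1}$ of $D_{2}$ and $D_{1}$ whose intersection is a neighborhood of the corner $p$. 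Exactly in the spirit of Remark \ref{Key-remark.-Two}, the cohomology then presents itself as the quotient of the foliated vector fields defined on the overlap by those extending holomorphically over $U_{2}$ and over $U_{1}$, the two subspaces being precisely the Lie algebras of infinitesimal automorphisms of the two models $\mathcal{F}_{2}$ and $\mathcal{F}_{1}$.

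The heart of the matter, and the step I expect to be the main obstacle, is to show that this quotient is one--dimensional and that the class of the family generates it. Expanding a foliated vector field on the overlap in the chart $\left(x_{1},y_{1}\right)$ and writing out the two extension conditions, the self--intersections enter decisively: holomorphy across $D_{2}$ and across $D_{1}$ imposes, through the transition maps $x_{2}=1/y_{1},\ y_{2}=y_{1}^{2}x_{1}$ and $x_{4}=1/y_{3},\ y_{4}=y_{3}x_{3}$, truncation conditions on the Taylor modes whose combined effect leaves exactly one surviving class. On the other hand, differentiating the gluing with respect to the parameter gives $\frac{\partial}{\partial\alpha}\left(g_{\sigma}\circ\Phi_{\alpha}\right)=\left(x_{1}y_{1},0\right)$, that is the foliated field $x_{1}y_{1}\,\partial_{x_{1}}$, and the Kodaira--Spencer class of $\left(\mathcal{F}_{\sigma,\alpha}\right)_{\alpha}$ at $\alpha_{0}$ is the class it determines. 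The delicate point is to verify that this class does not vanish, i.e. that $x_{1}y_{1}\,\partial_{x_{1}}$ cannot be written as the difference of two foliated fields extending respectively over $U_{2}$ and over $U_{1}$; this is where the value $-2$ of the self--intersection of $D_{2}$ makes the obstruction genuinely non--trivial, while the component of self--intersection $-1$ contributes nothing. Matching this explicit generator against the one--dimensional cohomology is the crux of the argument and will require the full coordinate bookkeeping.

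Granting the bijectivity of the Kodaira--Spencer map, Mattei's criterion gives that $\left(\mathcal{F}_{\sigma,\alpha}\right)_{\alpha\in\left(\mathbb{C},\alpha_{0}\right)}$ is an equisingular semi-universal unfolding of $\mathcal{F}_{\sigma,\alpha_{0}}$. For an arbitrary equisingular unfolding $\left(\mathcal{F}_{t}\right)_{t\in\left(\mathbb{C}^{p},0\right)}$ with $\mathcal{F}_{0}\sim\mathcal{F}_{\sigma,\alpha_{0}}$, the completeness of a semi-universal unfolding then produces a germ of map $t\mapsto\alpha\left(t\right)$ through which $\left(\mathcal{F}_{t}\right)$ is induced from the family, so that $\mathcal{F}_{t}\sim\mathcal{F}_{\sigma,\alpha\left(t\right)}$ for every $t$, which is the assertion to be proved.
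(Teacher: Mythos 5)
Your overall architecture matches the paper's: invoke Mattei's criterion that an equisingular unfolding is semi-universal when its Kodaira--Spencer map is an isomorphism, compute $H^{1}\left(D,\Theta\right)$ with the acyclic two-chart covering $U_{1},U_{2}$ coming from the gluing, identify the class of the family with the cocycle $x_{1}y_{1}\,\partial_{x_{1}}$, and conclude. But at the decisive point you stop: you write that verifying the non-vanishing of this class ``is the crux of the argument and will require the full coordinate bookkeeping,'' and that bookkeeping is never done. That verification \emph{is} the proof; everything before it is framing. Concretely, what is missing is the argument that the cohomological equation
\begin{equation*}
x_{1}y_{1}\frac{\partial}{\partial x_{1}}+\cdots=X_{2}-\left(g_{\sigma}\circ\Phi_{\alpha_{0}}\right)^{*}X_{1}
\end{equation*}
has no solution with $X_{2}$ holomorphic on $U_{2}$ and $X_{1}$ holomorphic on $U_{1}$, both tangent to the foliation and to the divisor. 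The paper carries this out as follows: writing $X_{2}=A_{2}\left(x_{1},y_{1}\right)x_{1}\partial_{x_{1}}$ and $X_{1}=A_{1}\left(x_{3},y_{3}\right)x_{3}\partial_{x_{3}}$, globality of $X_{2}$ in the second chart of $\mathcal{F}_{2}$ (via $x_{2}=1/y_{1}$, $y_{2}=y_{1}^{2}x_{1}$) forces every monomial $x_{1}^{i}y_{1}^{j}$ of $A_{2}$ to satisfy $2i-j\geq0$, so no pure $y_{1}$ term can occur in $A_{2}$; globality of $X_{1}$ forces $i\geq j$, and tangency to the divisor trace $\left\{ x_{3}=y_{3}\right\} $ in $U_{1}$ forces $A_{1}=\left(x_{3}-y_{3}\right)\tilde{A}_{1}$; pulling $X_{1}$ back through the gluing map then shows it can only produce the monomial $x_{1}y_{1}\partial_{x_{1}}$ if $\tilde{A}_{1}\left(0,0\right)\neq0$, which is incompatible with $A_{1}$ being global. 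Without some version of this analysis your text is a proof plan, not a proof.

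Two further points. First, you propose to establish by hand that the quotient $H^{0}\left(U_{1}\cap U_{2},\Theta\right)/\left(H^{0}\left(U_{1},\Theta\right)\oplus H^{0}\left(U_{2},\Theta\right)\right)$ is one-dimensional (``leaves exactly one surviving class''); this too is asserted, not computed, and it is more than you need: the paper simply cites $\dim H^{1}\left(D,\Theta\right)=1$ from \cite{univ} and \cite{Caco}, so that only non-triviality of the Kodaira--Spencer image has to be proved. Your accompanying heuristic that the $-1$ component ``contributes nothing'' while the $-2$ component carries the obstruction is also not borne out by the actual computation, where the constraint on $X_{1}$ (the $-1$ side, through its vanishing on the divisor trace $x_{3}=y_{3}$) is just as essential as the constraint on $X_{2}$. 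Second, you dismiss as ``immediate'' the fact that the family is an unfolding at all; the paper makes this a genuine step, constructing the thick gluing of $\mathcal{F}_{i}\times\left(\mathbb{C},\alpha_{0}\right)$ to exhibit the codimension-one foliation in dimension three, transverse to the fibers of $\pi$, restricting to $\mathcal{F}_{\sigma,\alpha_{0}}$ over $\alpha_{0}$. That construction is easy but should be stated, since a family of foliations is not by itself an unfolding.
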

Before proving the above result, let us recall that an \emph{unfolding}
of a given foliation $\mathcal{F}$ is a germ $\mathbb{F}$ of codimension
$1$ foliation in $\left(\mathbb{C}^{2+p},0\right)$ transversal to
the fiber of the projection $\left(\mathbb{C}^{2+p},0\right)\to\left(\mathbb{C}^{p},0\right),\ \pi:\left(x,t\right)\to t$
such that $\left.\mathbb{F}\right|_{\pi^{-1}\left(0\right)}\sim\mathcal{F}$.
The \emph{equisingularity} property is a quite technical property
to state. However, it means basically that the topology of the process
of desingularization of the family of foliation$\left.\mathbb{F}\right|_{t=\alpha}$
does not depend on $\alpha.$ For the details, we refer to \cite{univ}. 
\begin{proof}
\textbf{Step 1 - }Let us prove that the deformation $\left(\mathcal{F}_{\sigma,\alpha}\right)_{\alpha\in\left(\mathbb{C},\alpha_{0}\right)}$
of $\mathcal{F}_{\sigma,\alpha_{0}}$ is induced by an unfolding.
We can make the following \emph{thick }gluing 
\[
\mathbb{F}:=\EQ{\mathcal{F}_{1}\times\left(\mathbb{C},\alpha_{0}\right)\coprod\mathcal{F}_{2}\times\left(\mathbb{C},\alpha_{0}\right)}{\left(x_{1},y_{1},\alpha\right)\to\left(\left(g_{\sigma}\circ\Phi_{\alpha_{0}}\right)\circ\left(\Phi_{\alpha_{0}}^{-1}\circ\Phi_{\alpha}\right)\left(x_{1},y_{1}\right),\alpha\right).}
\]
where $\mathcal{F}_{i}\times\left(\mathbb{C},\alpha_{0}\right)$ stands
for the product foliation: its leaves are the product of a leaf of
$\mathcal{F}_{i}$ and of an open neighborhood of $\alpha_{0}$ in
$\mathbb{C}$. The codimension $1-$foliation $\mathbb{F}$ comes
clearly with a fibration defined by the quotient of the map $\pi:\left(p,\alpha\right)\to\alpha$
whose fibers are transverse to the foliation. Thus, the above gluing
is an unfolding. Now, the restriction $\left.\mathbb{F}\right|_{\pi^{-1}\left(\alpha_{0}\right)}=\EQ{\mathcal{F}_{1}\coprod\mathcal{F}_{2}}{\left(g_{\sigma}\circ\Phi_{\alpha_{0}}\right)}$
is equal to $\mathcal{F}_{\sigma,\alpha_{0}}$. Finally, it is equisingular
by construction. Thus, it satisfies all the properties of an equisingular
unfolding in the sense of Mattei. 

\textbf{Step 2 - }Let us consider the sheaf $\Theta$ whose base is
the exceptionnal divisor $E^{-1}\left(0\right)=D=D_{2}\cup D_{1}$
of tangent vector fields to the foliation $E^{*}\mathcal{F}_{\sigma,\alpha_{0}}$
and to the divisor $E^{-1}\left(0\right)$. The cohomological group
$H^{1}\left(D,\Theta\right)$ represents the finite dimensionnal $\mathbb{C}$-space
of infinitesimal unfoldings. Following \cite{univ}, there exists
a\emph{ Kodaira-Spencer map like} that associate to any unfolding
with parameter in $\left(\mathbb{C}^{p},0\right)$, its Kodaira Spencer
derivative which is a linear map from $\mathbb{C}^{P}$ to $H^{1}\left(D,\Theta\right)$.
The unfolding is semi-universal as in the theorem above if and only
if its Kodaira Spencer derivative is a linear isomorphism. 

We consider the covering of the exceptionnal divisor $E^{-1}\left(0\right)$
by two open sets $U_{1}$ and $U_{2}$ where $U_{1}$ and $U_{2}$
are respectively tubular neighborhood of $D_{1}$ and $D_{2}$. It
is known that this covering is acyclic with respect to the sheaf $\Theta$,
i.e, $H^{1}\left(D_{i},\Theta\right)=0$. Therefore, following \cite{Godement}
to compute the cohomological group $H^{1}\left(D,\Theta\right)$ we
can use this covering, that is to say, the following isomorphism 
\begin{equation}
H^{1}\left(D,\Theta\right)\simeq\frac{H^{0}\left(U_{1}\cap U_{2},\Theta\right)}{H^{0}\left(U_{1},\Theta\right)\oplus H^{0}\left(U_{2},\Theta\right)}.\label{eq:iso}
\end{equation}
In view of the glued construction of $\mathcal{F}_{\sigma,\alpha_{0}}$,
a $0-$cocycle $X_{12}$ in $H^{0}\left(U_{1}\cap U_{2},\Theta\right)$
is trivial in $H^{1}\left(D,\Theta\right)$ if and only if the cohomological
equation 
\begin{equation}
X_{12}=X_{1}-\left(g_{\sigma}\circ\Phi_{\alpha_{0}}\right)^{*}X_{2}\label{eq:coho}
\end{equation}
admits a solution where $X_{1}\in H^{0}\left(U_{1},\Theta\right)$
and $X_{2}\in H^{0}\left(U_{2},\Theta\right)$. Now, it is known \cite{univ}\cite{Caco}
that the dimension of the $\mathbb{C}$ space$H^{1}\left(D,\Theta\right)$
is $1$. Thus, to prove the result, it is enough to show that the
image of the deformation $\left(\mathcal{F}_{\sigma,\alpha}\right)_{\alpha\in\left(\alpha_{0},\mathbb{C}\right)}$
by the foliated Kodaira-Spencer map is not trivial in $H^{1}\left(D,\Theta\right)$.
The foliation $\mathcal{F}_{\sigma,\alpha}$ is obtained from $\mathcal{F}_{\sigma,\alpha_{0}}$
by gluing with the automorphism 
\[
\Phi_{\alpha_{0}}^{-1}\circ\Phi_{\alpha}\left(x_{1},y_{1}\right)=\left(x_{1}\frac{1+\alpha y_{1}}{1+\alpha_{0}y_{1}},y_{1}\right).
\]
Thus, its image by the Kodaira-Spencer map is the cocycle 
\[
\left.\frac{\partial}{\partial\alpha}\Phi_{\alpha_{0}}^{-1}\circ\Phi_{\alpha}\right|_{\alpha=\alpha_{0}}=\frac{x_{1}y_{1}}{1+\alpha_{0}y_{1}}\frac{\partial}{\partial x_{1}}
\]
Hence, the unfolding is semi-universal if and only if the equation
\begin{equation}
x_{1}y_{1}\frac{\partial}{\partial x_{1}}+\cdots=X_{2}-\left(g_{\sigma}\circ\Phi_{\alpha_{0}}\right)^{*}X_{1}\label{eq:cohom}
\end{equation}
 has no solution. This equation can be more precisely written in the
following way 
\[
x_{1}y_{1}\frac{\partial}{\partial y_{1}}+\cdots=A_{2}\left(x_{1},y_{1}\right)x_{1}\frac{\partial}{\partial x_{1}}-\left(g_{\sigma}\circ\Phi_{\alpha_{0}}\right)^{*}\left(A_{1}\left(x_{3},y_{3}\right)x_{3}\frac{\partial}{\partial x_{3}}\right)
\]
where $A_{1}$ and $A_{2}$ are functions defined respectively in
$U_{1}$ and $U_{2}$. Let us write the Taylor expansion of $A_{2}=\sum_{ij}a_{ij}^{2}x_{1}^{i}y_{1}^{j}$.
In the coordinates $\left(x_{2},y_{2}\right)$ the function $A_{2}$
is written $A_{2}=\sum_{ij}a_{ij}^{2}x_{2}^{2i-j}y_{2}^{i}.$ Therefore,
if $a_{ij}^{2}\neq0$ then $2i-j\ge0$ and the monomial term $y_{1}$
cannot appear in the Taylor expansion of $A_{2}$. In the same way,
the Taylor expansion of $A_{1}=\sum_{ij}a_{ij}^{1}x_{3}^{i}y_{3}^{j}$,
satisfies $a_{ij}^{1}\neq0\Rightarrow i\geq j$. Since $X_{1}$ vanishes
along the exceptionnal divisor whose trace in $U_{1}$ is the diagonal
$x_{3}=y_{3}$, we have $A_{1}=\left(x_{3}-y_{3}\right)\tilde{A}_{1}$.
Thus, in the coordinates $\left(x_{1},y_{1}\right)$, $X_{1}$ is
written 
\[
X_{1}=\tilde{A_{1}}\left(x_{1}\left(1+\alpha_{0}y_{1}\right)+\sigma\left(y_{1}\right),\sigma\left(y_{1}\right)\right)\left(x_{1}\left(1+\alpha_{0}y_{1}\right)+\sigma\left(y_{1}\right)\right)x_{1}\frac{\partial}{\partial x_{1}}.
\]
If $\tilde{A}_{1}\left(0,0\right)=0$ then the term $y_{1}x_{1}\frac{\partial}{\partial y_{1}}$
of the Taylor expansion of the cocycle (\ref{eq:cohom}) cannot come
from $X_{1}$. However, if $\tilde{A_{1}}\left(0,0\right)\neq0$ then
$A_{1}$ cannot be global. Therefore, the equation (\ref{eq:cohom})
cannot be solved, which proves the result. 
\end{proof}
We observe that $\mathcal{F}_{\sigma,\alpha}$ is an unfolding over
the whole $\mathbb{C}.$ Actually in the course of the above proof,
we obtain a more precise result
\begin{cor}
\label{cor:More-generally,-for}More generally, for any germ of function
$A\left(x,y\right)$ with $A\left(0,0\right)\neq0$, the $\mathbb{C}$-space
$H^{1}\left(D,\Theta\right)$ for the foliation 
\[
\EQ{\mathcal{F}_{1}\coprod\mathcal{F}_{2}}{\left(x_{1},y_{1}\right)\to\left(x_{1}A\left(x_{1},y_{1}\right)+\sigma\left(y_{1}\right),\sigma\left(y_{1}\right)\right).}
\]
 is generated by the cocycle the image of $x_{1}y_{1}\frac{\partial}{\partial y_{1}}$
through the isomorphism (\ref{eq:iso}). In particular, any deformation
of the form 
\[
\epsilon\to\EQ{\left(\mathcal{F}_{1}\coprod\mathcal{F}_{2}\right)_{\epsilon}}{\left(x_{1},y_{1}\right)\to\left(x_{1}A_{\epsilon}\left(x_{1},y_{1}\right)+\sigma\left(y_{1}\right),\sigma\left(y_{1}\right)\right)}
\]
where $\frac{\partial A_{\epsilon}}{\partial y_{1}}\left(0,0\right)$
does not depend on $\epsilon$ is locally analytically trivial. 
\end{cor}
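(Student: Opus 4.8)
The plan is to run the same cohomological analysis as in the proof of the preceding theorem, observing that it used the particular gluing $g_{\sigma}\circ\Phi_{\alpha_{0}}$ only through two features that persist here: that it has the shape $\left(x_{1},y_{1}\right)\mapsto\left(x_{1}A\left(x_{1},y_{1}\right)+\sigma\left(y_{1}\right),\sigma\left(y_{1}\right)\right)$ with $A\left(0,0\right)\neq0$, and that $\sigma'\left(0\right)\neq0$. First I would recompute $H^{1}\left(D,\Theta\right)$ through the isomorphism (\ref{eq:iso}): a $0$-cochain is $X_{2}=A_{2}x_{1}\partial_{x_{1}}$ on $U_{2}$ and $X_{1}=A_{1}x_{3}\partial_{x_{3}}$ on $U_{1}$, and the holomorphy conditions across the two charts of each model yield exactly the previous constraints, $a_{ij}^{2}\neq0\Rightarrow2i-j\geq0$ and $a_{ij}^{1}\neq0\Rightarrow i\geq j$; these are intrinsic to $\mathcal{F}_{2}$ and $\mathcal{F}_{1}$ and do not see $A$. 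Since the trace of $D_{2}$ under the new gluing is still the diagonal $\left\{ x_{3}=y_{3}\right\}$, because $\left\{ x_{1}=0\right\}$ maps to $x_{3}=\sigma\left(y_{1}\right)=y_{3}$, tangency of $X_{1}$ again forces $A_{1}=\left(x_{3}-y_{3}\right)\tilde{A}_{1}$.

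Next I would substitute the general gluing into $X_{1}$ and read off, as in (\ref{eq:cohom}), the coefficient of $x_{1}y_{1}\partial_{x_{1}}$: the pullback has leading coefficient $x_{1}\tilde{A}_{1}\left(0,0\right)\bigl(A\left(0,0\right)x_{1}+\sigma'\left(0\right)y_{1}\bigr)$, so it produces an $x_{1}y_{1}$ term precisely when $\tilde{A}_{1}\left(0,0\right)\neq0$, which once more contradicts the globality condition $i\geq j$; and $X_{2}$ can never contribute such a term. As $\dim_{\mathbb{C}}H^{1}\left(D,\Theta\right)=1$, this shows that the class of $x_{1}y_{1}\partial_{x_{1}}$ is nonzero and hence generates, which is the first assertion for every $A$ with $A\left(0,0\right)\neq0$; the conditions $A\left(0,0\right)\neq0$ and $\sigma'\left(0\right)\neq0$ enter only to guarantee that the two leading monomials above are genuinely present.

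For the triviality statement I would compute the Kodaira--Spencer class of the family at a base point $\epsilon_{0}$ exactly as in Step 2, by differentiating the relative gluing $\tilde{G}_{\epsilon_{0}}^{-1}\circ\tilde{G}_{\epsilon}$, which fixes $y_{1}$ and sends $x_{1}$ to the solution of $x_{1}'A_{\epsilon_{0}}\left(x_{1}',y_{1}\right)=x_{1}A_{\epsilon}\left(x_{1},y_{1}\right)$. This yields the cocycle $\frac{x_{1}\dot{A}}{A_{\epsilon_{0}}+x_{1}\partial_{x_{1}}A_{\epsilon_{0}}}\partial_{x_{1}}$ with $\dot{A}=\partial_{\epsilon}A_{\epsilon}|_{\epsilon_{0}}$, whose image in $H^{1}\left(D,\Theta\right)\cong\mathbb{C}$ is, by the first part, detected by its coefficient along the generator. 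The point I would then establish is that, after subtracting the coboundaries coming from the infinitesimal automorphisms of $\mathcal{F}_{1}$ and $\mathcal{F}_{2}$ (the scalings and the reparametrisations of the base $y_{1}$), this coefficient is proportional to $\partial_{\epsilon}\bigl(\partial_{y_{1}}A_{\epsilon}\left(0,0\right)\bigr)$. Under the hypothesis that $\partial_{y_{1}}A_{\epsilon}\left(0,0\right)$ is independent of $\epsilon$ this derivative vanishes, so the Kodaira--Spencer class is zero for every $\epsilon$; since the base and $H^{1}\left(D,\Theta\right)$ are both one-dimensional, the classifying map of the family into the semi-universal unfolding $\mathcal{F}_{\sigma,\alpha}$ is constant, and the deformation is locally analytically trivial.

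The main obstacle is this last reduction: isolating from the Kodaira--Spencer cocycle the single number $\partial_{\epsilon}\partial_{y_{1}}A_{\epsilon}\left(0,0\right)$. The naive coefficient of $x_{1}y_{1}$ also carries a cross term built from $\partial_{\epsilon}A_{\epsilon}\left(0,0\right)$ and $\partial_{y_{1}}A_{\epsilon_{0}}\left(0,0\right)$, and one must check that this term is a genuine coboundary, i.e. that it is produced by an honest automorphism of one of the two models (a rescaling together with a base reparametrisation), so that only the variation of $\partial_{y_{1}}A_{\epsilon}\left(0,0\right)$ survives in $H^{1}\left(D,\Theta\right)$. Treating these coboundaries carefully, rather than working only with the foliation-tangent fields $A_{i}x_{i}\partial_{x_{i}}$, is the delicate step and is exactly what makes $\partial_{y_{1}}A_{\epsilon}\left(0,0\right)$ — and not the full $1$-jet of $A_{\epsilon}$ at the origin — the relevant invariant.
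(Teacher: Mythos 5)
Your first paragraph is correct, and it is exactly the paper's own route: the paper gives no separate proof of this corollary (it is declared to be obtained ``in the course of'' the proof of the semi-universality theorem), and indeed Step~2 of that proof uses nothing about the gluing beyond the shape $\left(x_{1}A\left(x_{1},y_{1}\right)+\sigma\left(y_{1}\right),\sigma\left(y_{1}\right)\right)$ with $A\left(0,0\right)\neq0$, $\sigma^{'}\left(0\right)\neq0$. The useful way to summarize what you (and the paper) prove there is: every coboundary $X_{2}-G^{*}X_{1}$ has vanishing coefficient on the monomial $x_{1}y_{1}\partial_{x_{1}}$, while the proposed generator has coefficient $1$; since $\dim_{\mathbb{C}}H^{1}\left(D,\Theta\right)=1$, the class of any cocycle is exactly its $x_{1}y_{1}$-coefficient.

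This very fact, however, ruins the second half of your proposal at precisely the step you flag as delicate. You want to discard the cross term $\dot{A}\left(0,0\right)\partial_{y_{1}}A_{\epsilon_{0}}\left(0,0\right)$ by exhibiting it as a coboundary coming from an automorphism of one of the models; but by your own first part a cocycle whose $x_{1}y_{1}$-coefficient is nonzero is never a coboundary, so no such absorption can exist. The class of your Kodaira--Spencer cocycle $\frac{x_{1}\dot{A}}{A_{\epsilon_{0}}+x_{1}\partial_{x_{1}}A_{\epsilon_{0}}}\partial_{x_{1}}$ is therefore the full quantity
\[
\frac{\partial_{y_{1}}\dot{A}\left(0,0\right)}{A_{\epsilon_{0}}\left(0,0\right)}-\frac{\dot{A}\left(0,0\right)\,\partial_{y_{1}}A_{\epsilon_{0}}\left(0,0\right)}{A_{\epsilon_{0}}\left(0,0\right)^{2}}=\left.\frac{\partial}{\partial\epsilon}\left(\frac{\partial_{y_{1}}A_{\epsilon}\left(0,0\right)}{A_{\epsilon}\left(0,0\right)}\right)\right|_{\epsilon=\epsilon_{0}},
\]
and not $\partial_{\epsilon}\partial_{y_{1}}A_{\epsilon}\left(0,0\right)$. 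Under the hypothesis as printed this need not vanish; in fact the literal statement is false. Take $A_{\epsilon}\left(x_{1},y_{1}\right)=\left(1+\epsilon\right)+\alpha y_{1}$ with $\alpha\neq0$: then $\partial_{y_{1}}A_{\epsilon}\left(0,0\right)=\alpha$ for all $\epsilon$, yet composing the gluing on the right with the global automorphism $\left(x_{1},y_{1}\right)\mapsto\left(\frac{x_{1}}{1+\epsilon},y_{1}\right)$ of $\mathcal{F}_{2}$ (which reads $\left(x_{2},y_{2}\right)\mapsto\left(x_{2},\frac{y_{2}}{1+\epsilon}\right)$ in the second chart) identifies the glued foliation with $\mathcal{F}_{\sigma,\frac{\alpha}{1+\epsilon}}$. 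By the semi-universality theorem the Kodaira--Spencer class of that family is $-\frac{\alpha}{\left(1+\epsilon_{0}\right)^{2}}$ times the generator, never zero, in agreement with the displayed formula, so the deformation is not locally analytically trivial; for $\sigma$ with trivial group of homographic symmetries, Proposition \ref{prop:Two-normal-forms} even shows the fibres are pairwise non-equivalent. The statement, and your argument, become correct once one adds the hypothesis that $A_{\epsilon}\left(0,0\right)$ is independent of $\epsilon$, or equivalently replaces the invariant by $\frac{\partial_{y_{1}}A_{\epsilon}\left(0,0\right)}{A_{\epsilon}\left(0,0\right)}$; this costs nothing for the paper, since in both places the corollary is applied one has $A_{\epsilon}=1+\alpha y_{1}+\epsilon\tilde{A}$ with $\tilde{A}\left(0,0\right)=0$, hence $A_{\epsilon}\left(0,0\right)\equiv1$. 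With that correction your computation finishes the proof in one line (the displayed class vanishes identically, so by Mattei's theory the classifying map to the semi-universal unfolding is constant); no coboundary argument is needed, and none is available.
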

As an easy consequence of the corollary, we obtain a theorem of normalization
of the construction of absolutely dicritical foliations of cusp type.
\begin{thm}
Any absolutely dicritical foliation of cusp type is equivalent to
some $\mathcal{F}_{\sigma,\alpha}.$\end{thm}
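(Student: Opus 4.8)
My plan is to exploit the gluing description of these foliations together with the rigidity supplied by Corollary~\ref{cor:More-generally,-for}. Let $\mathcal{F}$ be absolutely dicritical of cusp type. After the two blowing-ups $E$, the pulled-back foliation is regular and transverse to $D=D_2\cup D_1$; along each component the local model is prescribed by \cite{CamaMovaSad}, so a neighborhood of $D_2$ (resp. $D_1$) is isomorphic to $\mathcal{F}_2$ (resp. $\mathcal{F}_1$) and $\mathcal{F}$ is reconstructed by gluing the two models along a germ $\phi$ of biholomorphism at $p$. Since $\phi$ sends the fibration $y_1=\mathrm{cst}$ to $y_3=\mathrm{cst}$ and the branch $x_1=0$ to a curve transverse to $x_3=0$, it has the form $\phi(x_1,y_1)=(a(x_1,y_1),\sigma(y_1))$, where $\sigma\in\mathrm{Diff}(\mathbb{C},0)$ is the transversal structure and $a(0,0)=0$, $\partial_{x_1}a(0,0)\neq0$, $\partial_{y_1}a(0,0)\neq0$. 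By Remark~\ref{Key-remark.-Two}, it is enough to move $\phi$ to $g_\sigma\circ\Phi_\alpha$ by composing with automorphisms of $\mathcal{F}_2$ and $\mathcal{F}_1$.

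First I would straighten the image of $D_2$ onto the diagonal $\{x_3=y_3\}$, which is the trace of $D_2$ in the $\mathcal{F}_1$-chart for the normal form. Writing $\beta=a(0,\cdot)\circ\sigma^{-1}$, the curve $\phi(\{x_1=0\})$ is parametrized as $\{(\beta(s),s)\}$ with $\beta(0)=0$, $\beta'(0)\neq0$, and I post-compose $\phi$ with $\Phi_1(x_3,y_3)=(\beta^{-1}(x_3),y_3)$. After this, $\Phi_1\circ\phi(x_1,y_1)=(\beta^{-1}(a(x_1,y_1)),\sigma(y_1))$ sends $D_2$ to $\{x_3=y_3\}$, hence reads $(x_1A(x_1,y_1)+\sigma(y_1),\sigma(y_1))$ with $A(0,0)\neq0$ --- precisely the shape handled by Corollary~\ref{cor:More-generally,-for}.

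It then remains to reduce $A$ to $1+\alpha y_1$. Pre-composing with the scaling $(x_1,y_1)\mapsto(\mu x_1,y_1)$, which is an automorphism of $\mathcal{F}_2$, I normalize $A(0,0)=1$ and put $\alpha=\partial_{y_1}A(0,0)$. I then connect $A$ to $1+\alpha y_1$ through the segment $A_\epsilon=(1-\epsilon)A+\epsilon(1+\alpha y_1)$, along which $A_\epsilon(0,0)\equiv1$ and $\partial_{y_1}A_\epsilon(0,0)\equiv\alpha$. Corollary~\ref{cor:More-generally,-for} then gives that the corresponding family of glued foliations is (locally, hence along the whole connected segment) analytically trivial, so the member at $\epsilon=0$ is equivalent to the one at $\epsilon=1$, which is glued by $(x_1(1+\alpha y_1)+\sigma(y_1),\sigma(y_1))=g_\sigma\circ\Phi_\alpha$, that is $\mathcal{F}_{\sigma,\alpha}$. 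Hence $\mathcal{F}\sim\mathcal{F}_{\sigma,\alpha}$.

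The step demanding care is the straightening of Step~1: the map $\Phi_1$ must be a genuine automorphism of the whole model $\mathcal{F}_1$, not merely a germ at $p$. It plainly preserves the fibration and the divisor $x_3=0$; the point to verify is holomorphy across the second chart $(x_4,y_4)=(1/y_3,\,y_3x_3)$, where $\Phi_1$ reads $(x_4,\ \beta^{-1}(y_4x_4)/x_4)$ and is holomorphic exactly because $\beta^{-1}$ vanishes at the origin --- this is what forces the reparametrizing form $x_3\mapsto\beta^{-1}(x_3)$ rather than an arbitrary correction. Once $\phi$ is placed in the normal form of the Corollary, the one-dimensionality of $H^1(D,\Theta)$ and the invariance of $\partial_{y_1}A(0,0)$ finish the argument with no further computation.
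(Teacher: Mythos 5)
Your proposal is correct and takes essentially the same route as the paper: reconstruct $\mathcal{F}$ by gluing the two models, send the trace of $D_{2}$ onto the diagonal $\left\{ x_{3}=y_{3}\right\} $ by a leaf-preserving automorphism of $\mathcal{F}_{1}$, rescale to get $A\left(0,0\right)=1$, and use Corollary \ref{cor:More-generally,-for} to trivialize the remaining deformation --- indeed your segment $A_{\epsilon}=\left(1-\epsilon\right)A+\epsilon\left(1+\alpha y_{1}\right)$ equals $1+\alpha y_{1}+\left(1-\epsilon\right)\tilde{A}$, which is the paper's family up to the reparametrization $\epsilon\mapsto1-\epsilon$. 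The only (harmless) difference is presentational: you construct the straightening automorphism $\left(x_{3},y_{3}\right)\mapsto\left(\beta^{-1}\left(x_{3}\right),y_{3}\right)$ explicitly and verify its holomorphy in the chart $\left(x_{4},y_{4}\right)$, where the paper simply asserts the existence of such a global leaf-preserving automorphism.
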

\begin{proof}
Let us consider $\mathcal{F}$ an absolutely dicritical foliation
of cusp type and let $E$ be its associated reduction. Since along
each component of the exceptionnal divisor the foliation is purely
radial, there exists two automorphisms $\Phi_{1}$ and $\Phi_{2}$
that conjugates $\mathcal{F}$ respectively to the models $\mathcal{F}_{1}$
and $\mathcal{F}_{2}$ in the neighborhood of respectively $D_{1}$
and $D_{2}$. The cocycle of gluing is thus written $\Phi_{1}\circ\Phi_{2}^{-1}.$
Applying if necessary a global automorphism of $\Phi_{1}$ that let
invariant each leaf, we can suppose that $\Phi_{1}\circ\Phi_{2}^{-1}$
send the exceptionnal divisor $x_{1}=0$ on the line $x_{3}=y_{3}.$
Since the cocycle conjugates the foliations $\mathcal{F}_{2}$ and
$\mathcal{F}_{1}$, it can be written 
\[
\left(x_{1},y_{1}\right)\mapsto\left(x_{1}A\left(x_{1},y_{1}\right)+\sigma\left(y_{1}\right),\sigma\left(y_{1}\right)\right).
\]
for some $\sigma\in\mbox{Diff}\left(\mathbb{C},0\right)$ and some
$A\in\mathbb{C}\left\{ x_{1},y_{1}\right\} $ with $A\left(0,0\right)\neq0$.
Applying if necessary an automorphism of $\mathcal{F}_{2}$ defined
by $\left(\epsilon x_{3},\epsilon y_{3}\right)$ for some $\epsilon\neq0$,
we can suppose that $A\left(0,0\right)=1$. Now we can write the cocycle
\[
\left(x_{1},y_{1}\right)\mapsto\left(x_{1}\left(1+\alpha y_{1}+\tilde{A}\left(x_{1},y_{1}\right)\right)+\sigma\left(y_{1}\right),\sigma\left(y_{1}\right)\right).
\]
where no term of the form $ay_{1}$ appears in $\tilde{A}.$ According
to the corollary, the deformation parametrized by $\epsilon$ and
defined by the gluing cocycle 
\[
\left(x_{1},y_{1}\right)\mapsto\left(x_{1}\left(1+\alpha y_{1}+\epsilon\tilde{A}\left(x_{1},y_{1}\right)\right)+\sigma\left(y_{1}\right),\sigma\left(y_{1}\right)\right)
\]
is locally analytically trivial. Thus the foliation obtained setting
$\epsilon=1$ and $\epsilon=0$ are analytically equivalent and setting
$\epsilon=0$ yields a cocycle of the desired form. 
\end{proof}
The couple $\left(\alpha,\sigma\right)$ is unique up to conjugacies
fixing any point of the exceptionnal divisor. However, once we authorize
any kind of conjugacies, this couple is not unique anymore. But the
ambiguity can be described. 
\begin{prop}
\label{prop:Two-normal-forms}Two normal forms $\mathcal{F}_{\sigma,\alpha}$
and $\mathcal{F}_{\gamma,\alpha^{'}}$ are conjugated if and only
if there are two homographies $h_{0}$ and $h_{1}$ such that 
\begin{equation}
\begin{cases}
\sigma=h_{1}\circ\gamma\circ h_{0}\\
\frac{2}{5}\left(\alpha-\frac{3}{2}\frac{\sigma^{''}\left(0\right)}{\sigma^{'}\left(0\right)}\right)=\frac{2}{5}\left(\alpha^{'}-\frac{3}{2}\frac{\gamma^{''}\left(0\right)}{\gamma^{'}\left(0\right)}\right)h_{0}^{'}\left(0\right)-\frac{h_{0}^{''}\left(0\right)}{h_{0}^{'}\left(0\right)}
\end{cases}\label{eq:relation-fond}
\end{equation}
\end{prop}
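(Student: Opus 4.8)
The plan is to apply the Key remark (\ref{Key-remark.-Two}). Abbreviating the gluing cocycle by $\psi_{\sigma,\alpha}:=g_\sigma\circ\Phi_\alpha$, so that $\psi_{\sigma,\alpha}(x_1,y_1)=(x_1(1+\alpha y_1)+\sigma(y_1),\sigma(y_1))$, the two normal forms are conjugated if and only if there are automorphisms $\Phi_2$ of $\mathcal{F}_2$ and $\Phi_1$ of $\mathcal{F}_1$ such that
\[
\psi_{\sigma,\alpha}=\Phi_1\circ\psi_{\gamma,\alpha'}\circ\Phi_2.
\]
The first task is to describe these automorphisms. Each model foliation is the fibration by its leaves $y=cst$ with the divisor as zero section; an automorphism therefore preserves this fibration and the divisor, hence induces on $D_i\simeq\mathbb{P}^1$ a homography which, since it must fix the corner $p$, is read as $h_0,h_1\in\mbox{Diff}(\mathbb{C},0)$ in the coordinates $y_1$ on $D_2$ and $y_3$ on $D_1$. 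Moreover its linearization along $D_i$ is a bundle automorphism of the normal bundle, which is $\mathcal{O}(-2)$ along $D_2$ and $\mathcal{O}(-1)$ along $D_1$; such an automorphism covering $h_i$ scales the fibre by $m_i\,(h_i')^{-k_i/2}$ with $k_2=2,\ k_1=1$. Thus
\[
\Phi_2=(m_2\,(h_0')^{-1}x_1+O(x_1^2),\,h_0(y_1)),\qquad \Phi_1=(m_1\,(h_1')^{-1/2}x_3+O(x_3^2),\,h_1(y_3)).
\]

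I would then read the cocycle equation componentwise. The $y$-component gives immediately $\sigma=h_1\circ\gamma\circ h_0$, the first line of (\ref{eq:relation-fond}). For the $x$-component set $z=\gamma\circ h_0(y_1)$ and expand in powers of $x_1$. Its part independent of $x_1$ equals $B(z,z)$, where $B$ denotes the $x$-component of $\Phi_1$; matching it with $\sigma(y_1)=h_1(z)$ forces $\Phi_1$ to preserve the diagonal, $B(z,z)=h_1(z)$. Writing $\kappa_i=h_i''(0)/h_i'(0)$, this diagonal identity together with the bundle value $m_1(h_1')^{-1/2}$ of the linear part pins down the $2$-jet of $B$ on the diagonal: one finds $m_1=h_1'(0)^{3/2}$ and the quadratic fibre coefficient $\nu_1(0)=h_1'(0)\kappa_1$, so that $\beta_1(z):=\partial_{x_3}B(z,z)$ satisfies $\beta_1(0)=h_1'(0)$ and
\[
\frac{\beta_1'(0)}{\beta_1(0)}=(-\tfrac12+2)\kappa_1=\tfrac32\kappa_1 .
\]
The coefficient of $x_1$ in the cocycle equation reads $\beta_1(z)\,m_2(h_0'(y_1))^{-1}(1+\alpha' h_0(y_1))=1+\alpha y_1$; its logarithmic derivative at $y_1=0$, using $(\log\mu_2)'(0)=-\kappa_0$, yields $\alpha=\tfrac32\kappa_1\gamma'(0)h_0'(0)+\alpha' h_0'(0)-\kappa_0$. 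Substituting the pre-Schwarzian cocycle $\sigma''(0)/\sigma'(0)=\kappa_1\gamma'(0)h_0'(0)+(\gamma''(0)/\gamma'(0))h_0'(0)+\kappa_0$ coming from $\sigma=h_1\circ\gamma\circ h_0$ converts this into exactly the second line of (\ref{eq:relation-fond}) after multiplication by $\tfrac25$.

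For the converse I would reverse the construction: given $h_0,h_1$ satisfying (\ref{eq:relation-fond}), take $\Phi_1,\Phi_2$ with these homographies and the above linear parts and form $\Phi_1\circ\psi_{\gamma,\alpha'}\circ\Phi_2$. By the first relation this is again a cocycle of transversal structure $\sigma$; after a further leaf-preserving automorphism carrying the image of $\{x_1=0\}$ onto the diagonal, exactly as in the normalization theorem above, it takes the shape $(x_1(1+\hat\alpha y_1+\cdots)+\sigma(y_1),\sigma(y_1))$, and the second relation forces $\hat\alpha=\alpha$. Corollary (\ref{cor:More-generally,-for}) then absorbs the remaining higher order terms, giving $\mathcal{F}_{\gamma,\alpha'}\sim\mathcal{F}_{\sigma,\alpha}$.

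The crux is the second paragraph. The whole content of the relation, and in particular the coefficient $\tfrac32$ that produces the Schwarzian governing the moduli space in the main Theorem, comes from the quadratic term $\nu_1$ of $\Phi_1$ that the transversal (diagonal) constraint forces on us: its contribution $+2\nu_1(0)$ must be combined with the bundle weight $-\tfrac12$ carried by $\mathcal{O}(-1)$. Tracking this $2$-jet of the automorphisms, rather than only their linear parts, is the step where care is needed; the homographic freedom on the two divisor components is exactly what the two lines of (\ref{eq:relation-fond}) record.
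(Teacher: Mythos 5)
Your proof is correct: it runs the same overall scheme as the paper (the key remark (\ref{Key-remark.-Two}), the componentwise reading of $\psi_{\sigma,\alpha}=\Phi_{1}\circ\psi_{\gamma,\alpha'}\circ\Phi_{2}$, extraction of the coefficient of $x_{1}y_{1}$, and, for the converse, reduction to a normalized cocycle absorbed by Corollary \ref{cor:More-generally,-for}), and it lands on exactly the paper's intermediate identity $\alpha=\frac{3}{2}\frac{h_{1}''(0)}{h_{1}'(0)}\gamma'(0)h_{0}'(0)+\alpha'h_{0}'(0)-\frac{h_{0}''(0)}{h_{0}'(0)}$. Where you genuinely diverge is the mechanism that pins down the $2$-jets of the automorphisms, which is the heart of the matter since it produces the factor $\frac{3}{2}$. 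The paper, writing $\Phi_{1}=\left(x_{3}A_{1},h_{1}(y_{3})\right)$ and $\Phi_{2}=\left(x_{1}A_{2},h_{0}(y_{1})\right)$, gets $\partial_{x_{3}}A_{1}(0,0)=h_{1}''(0)$ and $\partial_{y_{1}}A_{2}(0,0)=-\frac{h_{0}''(0)}{h_{0}'(0)h_{1}'(0)}$ by chart-by-chart extendability: $\Phi_{1}$ must blow down to an automorphism at the origin of $\mathbb{C}^{2}$, forcing a divisibility of first jets by $\alpha x+\beta y$, and $\Phi_{2}$ must extend across the pole of $h_{0}$ in the $(x_{2},y_{2})$ chart, forcing $(\alpha'x_{2}+\beta')^{2}=\Gamma(\alpha x_{2}^{2}+ux_{2}+v)$. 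You instead classify the linearizations along $D_{2}$ and $D_{1}$ as bundle automorphisms of $\mathcal{O}(-2)$ and $\mathcal{O}(-1)$ covering Möbius maps, with fibre weights $m_{2}(h_{0}')^{-1}$ and $m_{1}(h_{1}')^{-1/2}$, and then let the diagonal identity $B(z,z)=h_{1}(z)$ fix $m_{1}=h_{1}'(0)^{3/2}$ and the quadratic coefficient; your values agree with the paper's ($r=h_{1}''(0)$, $s=-\frac{h_{1}''(0)}{2}$, $u$ as above), and your logarithmic derivative at $0$ is the paper's identification of the $y_{1}$-coefficient in (\ref{eq:alpha}). Both arguments exploit the same globality supplied by the key remark; yours makes the origin of $\frac{3}{2}$ (the weights $-\frac{1}{2}+2$) conceptually transparent and shortens the computation, at the price of resting on the (standard but unproved here) classification of bundle automorphisms of $\mathcal{O}(-k)$ over $\mathbb{P}^{1}$, whereas the paper's extendability computations are elementary and self-contained. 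One small point in your converse: to land exactly on $\mathcal{F}_{\sigma,\alpha}$ you should fix the free constants, e.g. $m_{2}=h_{0}'(0)/h_{1}'(0)$, so that after the leaf-preserving renormalization the cocycle has $\hat{A}(0,0)=1$; otherwise the extra scaling needed would replace $\sigma(z)$ by $\sigma(\epsilon z)$. The paper avoids this issue by exhibiting $A_{1}$ and $A_{2}$ in closed form in its Step 2.
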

\begin{proof}
\textbf{Step 1 -} In view of our gluing construction and following
the key remark (\ref{Key-remark.-Two}), the existence of a conjugacy
implies that there exist two automorphisms of respectively $\mathcal{F}_{2}$
and $\mathcal{F}_{1}$ written $\Phi_{2}=\left(x_{1}A_{2}\left(x_{1},y_{1}\right),h_{0}\left(y_{1}\right)\right)$
and $\Phi_{1}=\left(x_{3}A_{1}\left(x_{3},y_{3}\right),h_{1}\left(y_{3}\right)\right)$
such that 
\[
\left(x_{1}\left(1+\alpha y_{1}\right)+\sigma\left(y_{1}\right),\sigma\left(y_{1}\right)\right)=\Phi_{1}\circ\left(x_{1}\left(1+\alpha^{'}y_{1}\right)+\gamma\left(y_{1}\right),\gamma\left(y_{1}\right)\right)\circ\Phi_{2}.
\]
First, we obviously get the following relation $\sigma=h_{1}\circ\gamma\circ h_{0}.$
Moreover, if we look at the first component of the above relation
we get
\begin{eqnarray*}
x_{1}\left(1+\alpha y_{1}\right)+\sigma\left(y_{1}\right) & = & \left(x_{1}A_{2}\left(x_{1},y_{1}\right)\left(1+\alpha^{'}h_{0}\right)+\gamma\circ h_{0}\right)\times\\
 &  & A_{1}\left(x_{1}A_{2}\left(x_{1},y_{1}\right)\left(1+\alpha^{'}h_{0}\right)+\gamma\circ h_{0},\gamma\circ h_{0}\right)
\end{eqnarray*}
If we compute the derivative $\frac{\partial}{\partial x_{1}}$ of
the above relation and then set $x_{1}=0$, we get 
\begin{eqnarray}
1+\alpha y_{1} & = & A_{2}\left(0,y_{1}\right)\left(1+\alpha^{'}h_{0}\right)\times\nonumber \\
 &  & \left(\gamma\circ h_{0}\frac{\partial A_{1}}{\partial x_{1}}\left(\gamma\circ h_{0},\gamma\circ h_{0}\right)+A_{1}\left(\gamma\circ h_{0},\gamma\circ h_{0}\right)\right)\label{eq:alpha}
\end{eqnarray}

\begin{enumerate}
\item Now, since $\Phi_{1}$ preserve the curve $y=x,$ we obtain 
\[
A_{1}\left(x,x\right)=\frac{h_{1}\left(x\right)}{x}
\]
Thus, $A_{1}\left(0,0\right)=h_{1}^{'}\left(0\right).$ Setting $y_{1}=0$
in the relation above, we get $1=A_{2}\left(0,0\right)A_{1}\left(0,0\right).$
Therefore, $A_{2}\left(0,0\right)=\frac{1}{h_{1}^{'}\left(0\right)}.$
Now, let us write the Taylor expansion of $A_{1}$ 
\[
A_{1}\left(x_{3},y_{3}\right)=h_{1}^{'}\left(0\right)+rx_{3}+sy_{3}+\cdots.
\]
Since, $A_{1}\left(x,x\right)=\frac{h_{1}\left(x\right)}{x},$ we
have $r+s=\frac{h_{1}^{''}\left(0\right)}{2}$. Now, the biholomorphism
$\left(x_{3}A_{1}\left(x_{3},y_{3}\right),h_{1}\left(y_{3}\right)\right)$
is global: therefore, it can be push down and extended at the origin
of $\mathbb{C}^{2}$ as a local automorphism written 
\[
\left(x,y\right)\mapsto\left(xA_{1}\left(x,\frac{y}{x}\right),h_{1}\left(\frac{y}{x}\right)xA_{1}\left(x,\frac{y}{x}\right)\right).
\]
The second component of this expression is written 
\[
\frac{y}{\alpha x+\beta y}\left(h_{1}^{'}\left(0\right)x+rx^{2}+sy+\cdots\right)
\]
where $\alpha=\frac{1}{h_{1}^{'}\left(0\right)}$ and $\beta=-\frac{h_{1}^{''}\left(0\right)}{2h_{1}^{'}\left(0\right)^{2}}$.
It is extendable at $\left(0,0\right)$ if and only if the expression
in parenthesis can be holomorphically divided by $\alpha x+\beta y$.
Looking at the first jet of these expressions leads to 
\[
\left|\begin{array}{cc}
\beta & \alpha\\
s & h_{1}^{'}\left(0\right)
\end{array}\right|=0\Longrightarrow s=\frac{\beta h_{1}^{'}\left(0\right)}{\alpha}=-\frac{h_{1}^{''}\left(0\right)}{2}
\]
Finally, we have $r=h_{1}^{''}\left(0\right).$
\item In the same way, let us write the Taylor expansion of $A_{2}\left(x_{1},y_{1}\right)=\frac{1}{h_{1}^{'}\left(0\right)}+uy_{1}+vy_{1}^{2}+\cdots.$
The second component of the expression of $\Phi_{2}$ in the coordinates
$\left(x_{2},y_{2}\right)$ is $y_{2}x_{2}^{2}h_{0}^{2}\left(\frac{1}{x_{2}}\right)A_{2}\left(y_{2}x_{2}^{2},\frac{1}{x_{2}}\right)$
which is equal to 
\[
\frac{y_{2}}{\left(\alpha^{'}x_{2}+\beta^{'}\right)^{2}}\left(\alpha x_{2}^{2}+ux_{2}+v+y_{2}\left(\cdots\right)\right)
\]
where $\alpha^{'}=\frac{1}{h_{0}^{'}\left(0\right)}$ and $\beta^{'}=-\frac{h_{0}^{''}\left(0\right)}{2h_{0}^{'}\left(0\right)^{2}}$.
Since it is extendable at $x_{1}=-\frac{\beta^{'}}{\alpha^{'}}$,
there exists a constant $\Gamma$ such that $\left(\alpha^{'}x_{2}+\beta^{'}\right)^{2}=\Gamma\left(\alpha x_{2}^{2}+ux_{2}+v\right)$
Hence, we have the equality $u=2\frac{\alpha\beta^{'}}{\alpha^{'}}=-\frac{h_{0}^{''}\left(0\right)}{h_{0}^{'}\left(0\right)h_{1}^{'}\left(0\right)}$.
\end{enumerate}
Now, we can identified the coefficient of the equation (\ref{eq:alpha})

It is 
\begin{eqnarray*}
\alpha & = & A_{2}\left(0,0\right)\left(\gamma^{'}\left(0\right)h_{0}^{'}\left(0\right)\frac{\partial A_{1}}{\partial x_{1}}\left(0,0\right)+\frac{h_{1}^{''}\left(0\right)}{2}\gamma^{'}\left(0\right)h_{0}^{'}\left(0\right)+\alpha^{'}h_{0}^{'}\left(0\right)h_{1}^{'}\left(0\right)\right)\\
 &  & \quad+uh_{1}^{'}\left(0\right)\\
 & = & \frac{3}{2}\gamma^{'}\left(0\right)h_{0}^{'}\left(0\right)\frac{h_{1}^{''}\left(0\right)}{h_{1}^{'}\left(0\right)}-\frac{h_{0}^{''}\left(0\right)}{h_{0}^{'}\left(0\right)}+\alpha^{'}h_{0}^{'}\left(0\right).
\end{eqnarray*}
Using the relation $\sigma=h_{1}\circ\gamma\circ h_{0}$ the above
equality can be formulated as in the theorem. 

\textbf{Step 2 - }We suppose that the conclusion of the statement
is satisfied. Let us suppose that 
\[
h_{1}\left(z\right)=\frac{z}{\alpha+\beta z}\qquad h_{0}\left(z\right)=\frac{z}{a+bz}
\]
Then we set
\begin{eqnarray*}
A_{2}\left(x_{1},y_{1}\right) & = & \alpha+2\frac{\alpha b}{a}y_{1}+\frac{\alpha b}{a^{2}}y_{1}^{2}\\
A_{1}\left(x_{3},y_{3}\right) & = & \frac{\alpha+\beta y_{3}}{\left(\alpha+\beta x_{3}\right)^{2}}.
\end{eqnarray*}
In view of the computations done in the first step, the two automorphisms
$\Phi_{1}$ and $\Phi_{2}$ associated to $A_{1}$ and $A_{2}$ can
be extended on $U_{1}$ and $U_{2}$, tubular beighborhood of $D_{1}$
and $D_{2}$. Moreover, we obtain the following relation 
\begin{eqnarray*}
\left(x_{1}\left(1+\alpha y_{1}+\Delta\left(x_{1},y_{1}\right)\right)+\sigma\left(y_{1}\right),\sigma\left(y_{1}\right)\right)\qquad\\
=\Phi_{1}\circ\left(x_{1}\left(1+\alpha^{'}y_{1}\right)+\gamma\left(y_{1}\right),\gamma\left(y_{1}\right)\right)\circ\Phi_{2}
\end{eqnarray*}
where $\Delta$ does not contain any monomial term in $y_{1}$. Now,
using the proposition (\ref{cor:More-generally,-for}), we see that
the deformation defined by 
\[
\epsilon\to\left(x_{1}\left(1+\alpha y_{1}+\epsilon\Delta\left(x_{1},y_{1}\right)\right)+\sigma\left(y_{1}\right),\sigma\left(y_{1}\right)\right)
\]
is analytically trivial, which ensures the theorem. \end{proof}
\begin{thm}
The moduli space of absolutely dicritical foliations of cusp type
can be identified with the functionnal space $\mathbb{C}\left\{ z\right\} $
up to the action of $\mathbb{C}^{*}$ defined by 
\[
\epsilon\cdot\left(z\mapsto\sigma\left(z\right)\right)=\epsilon^{2}\sigma\left(\epsilon z\right).
\]
\end{thm}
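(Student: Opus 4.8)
The plan is to reduce everything to Proposition \ref{prop:Two-normal-forms} and to read off the two equations of (\ref{eq:relation-fond}) through the Schwarzian derivative. By the preceding normalization theorem every absolutely dicritical foliation of cusp type is some $\mathcal{F}_{\sigma,\alpha}$, so the moduli space is the set of pairs $(\sigma,\alpha)\in\mbox{Diff}(\mathbb{C},0)\times\mathbb{C}$ modulo the relation of Proposition \ref{prop:Two-normal-forms}. To each such pair I would attach the invariants
\[
f=S(\sigma)\in\mathbb{C}\{z\},\qquad \beta=\alpha-\frac{3}{2}\frac{\sigma''(0)}{\sigma'(0)}\in\mathbb{C}.
\]
Since the Schwarzian operator is onto $\mathbb{C}\{z\}$ (one recovers $\sigma$ from $f$ by solving the associated second order linear equation) and $\alpha$ is free, the assignment $(\sigma,\alpha)\mapsto(f,\beta)$ is surjective onto $\mathbb{C}\{z\}\times\mathbb{C}$; thus it suffices to transport the equivalence relation to the target and compute the quotient.

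Next I would translate the two conditions of (\ref{eq:relation-fond}). The relation $\sigma=h_1\circ\gamma\circ h_0$ with $h_0,h_1$ homographies fixing the origin is, after applying the Schwarzian, exactly the statement that $f$ and $g:=S(\gamma)$ are related by the quadratic-differential action of $h_0$,
\[
f(z)=g\left(h_0(z)\right)\left(h_0'(z)\right)^2,
\]
because $S$ is invariant under left composition by a homography and obeys the cocycle rule $S(\gamma\circ h_0)=\left(S(\gamma)\circ h_0\right)(h_0')^2$ under right composition; conversely this identity forces $\sigma\circ h_0^{-1}$ and $\gamma$ to have equal Schwarzian, hence to differ by a homography $h_1$. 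The second equation of (\ref{eq:relation-fond}) is precisely a relation between $\beta$ and $\beta':=\alpha'-\frac{3}{2}\gamma''(0)/\gamma'(0)$,
\[
\frac{2}{5}\beta=\frac{2}{5}\beta'\,h_0'(0)-\frac{h_0''(0)}{h_0'(0)}.
\]
Therefore the equivalence on $(f,\beta)$ is nothing but the action of the two–parameter group $G$ of homographies fixing $0$: it acts on $f$ as a quadratic differential and on $\beta$ by the affine rule above, where $h_0'(0)$ and $h_0''(0)/h_0'(0)$ run over $\mathbb{C}^*\times\mathbb{C}$ as $h_0$ ranges over $G$.

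Finally I would carry out the reduction of this $G$-action. The coefficient $h_0''(0)/h_0'(0)$, which is independent of $h_0'(0)$, occurs with nonzero weight in the $\beta$-equation, so for any $(f,\beta)$ there is a homography fixing $0$ bringing $\beta$ to $0$; hence every orbit meets the slice $\{\beta=0\}\cong\mathbb{C}\{z\}$. The subgroup preserving this slice is cut out by $h_0''(0)=0$, i.e. consists of the linear maps $z\mapsto\epsilon z$ with $\epsilon\in\mathbb{C}^*$, and on such maps the quadratic-differential action reads $f\mapsto\epsilon^2 f(\epsilon z)$. Two elements of the slice thus lie in the same $G$-orbit if and only if they are exchanged by this $\mathbb{C}^*$-action, and one concludes that the moduli space is identified with $\mathbb{C}\{z\}$ modulo $\epsilon\cdot f=\epsilon^2 f(\epsilon z)$. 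The hard part, and the only step that is more than bookkeeping, is securing this decoupling: one must check that the non-linear parameter $h_0''(0)/h_0'(0)$ acts freely and transitively on the $\beta$-coordinate and independently of the scaling parameter, so that normalizing $\beta=0$ leaves exactly the scaling $\mathbb{C}^*$ and no residual ambiguity on $f$.
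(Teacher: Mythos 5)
Your proposal is correct and takes essentially the same route as the paper's proof: both rest on the normal-form theorem, on Proposition \ref{prop:Two-normal-forms}, on the normalization $\alpha=\frac{3}{2}\,\sigma''(0)/\sigma'(0)$ (your slice $\beta=0$, reached by the same choice of $h_{0}$ and preserved exactly by the linear maps $z\mapsto\epsilon z$), and on the Schwarzian derivative to turn the residual reparametrizations into the action $\epsilon\cdot f=\epsilon^{2}f(\epsilon z)$ on $\mathbb{C}\left\{ z\right\}$. The only difference is presentational --- you transport the equivalence relation to $(f,\beta)$-coordinates as a group action before slicing, whereas the paper slices first and applies the Schwarzian afterwards --- which changes nothing of substance.
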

\begin{proof}
We can consider the following family parametrized by $\mbox{Diff}\left(\mathbb{C},0\right)$
\[
\sigma\in\mbox{Diff}\left(\mathbb{C},0\right)\to\mathcal{F}_{\frac{3}{2}\frac{\sigma^{''}\left(0\right)}{\sigma^{'}\left(0\right)},\sigma}.
\]
It is a complete family for absolutely dicritical foliations of cusp
type: in any class of absolutely dicritical foliation of cusp type
there is one that is analytically equivalent to one of the form $\mathcal{F}_{\frac{3}{2}\frac{\sigma^{''}\left(0\right)}{\sigma^{'}\left(0\right)},\sigma}$.
Indeed, considering the foliation $\mathcal{F}_{\alpha^{'},\gamma}$,
we can choose $h_{0}$ such that $\frac{2}{5}\left(\alpha^{'}-\frac{3}{2}\frac{\gamma^{''}\left(0\right)}{\gamma^{'}\left(0\right)}\right)h_{0}^{'}\left(0\right)-\frac{h_{0}^{''}\left(0\right)}{h_{0}^{'}\left(0\right)}=0$.
Therefore, setting $\sigma=\gamma\circ h_{0}$ ensures that $\mathcal{F}_{\alpha^{'},\gamma}$
and $\mathcal{F}_{\frac{3}{2}\frac{\sigma^{''}\left(0\right)}{\sigma^{'}\left(0\right)},\sigma}$
are analytically equivalent. Moreover, if $\mathcal{F}_{\frac{3}{2}\frac{\sigma_{0}^{''}\left(0\right)}{\sigma_{0}^{'}\left(0\right)},\sigma_{0}}$
and $\mathcal{F}_{\frac{3}{2}\frac{\sigma_{1}^{''}\left(0\right)}{\sigma_{1}^{'}\left(0\right)},\sigma_{1}}$
are analytically equivalent then there exists $\epsilon\in\mathbb{C}^{*}$
and an homographie $h_{1}$ such that 
\begin{equation}
\sigma_{0}\left(z\right)=h_{1}\circ\sigma_{1}\circ\left(\epsilon z\right).\label{eq:rel-simp}
\end{equation}
Indeed, the second homographie $h_{0}$ that appears in the proposition
(\ref{prop:Two-normal-forms}) has to be linear for the relations
(\ref{eq:relation-fond}) ensures that $h_{0}^{''}\left(0\right)=0$.
Thus, $h_{0}$ is written $z\mapsto\epsilon z$ for some $\epsilon.$
To simplify the relation (\ref{eq:rel-simp}), we use the Schwartzian
derivative which is a surjective operator defined by 

\[
\mathcal{S}:\begin{cases}
\mbox{Diff}\left(\mathbb{C},0\right)\to\mathbb{C}\left\{ z\right\} \\
y\mapsto\frac{3}{2}\left(\frac{y^{'''}}{y^{'}}\right)-\left(\frac{y^{''}}{y^{'}}\right)^{2}
\end{cases}
\]
and satisfying the following property: the relation (\ref{eq:rel-simp})
is equivalent to $\mathcal{S}\left(\sigma_{0}\right)\left(z\right)=\epsilon^{2}\mathcal{S}\left(\sigma_{1}\right)\left(\epsilon z\right)$.
Therefore, the moduli space of absolutely dicritical foliation of
cusp type is identified via the Schwartzian derivative to the quotient
of $\mathbb{C}\left\{ z\right\} $ up to the action of $\mathbb{C}^{*}$$\epsilon\cdot\left(z\mapsto\sigma\left(z\right)\right)=\epsilon^{2}\sigma\left(\epsilon z\right).$
\end{proof}
As mentionned in the introduction, this theorem does not state that
the transversal structure $\sigma$ is the sole analytical invariant
of an absolutely dicritical foliation of cusp type. Indeed, the action
of the group of conjugacies act transversaly to the transverse structures
$\sigma$ and to the moduli of Mattei $\alpha$. The family $\mathcal{F}_{\frac{3}{2}\frac{\sigma^{''}\left(0\right)}{\sigma^{'}\left(0\right)},\sigma}$
is a complete tranversal set for this action 

\noindent \begin{center}
\begin{picture}(0,0)%
\includegraphics{conj.pstex}%
\end{picture}%
\setlength{\unitlength}{4144sp}%
\begingroup\makeatletter\ifx\SetFigFont\undefined%
\gdef\SetFigFont#1#2#3#4#5{%
  \reset@font\fontsize{#1}{#2pt}%
  \fontfamily{#3}\fontseries{#4}\fontshape{#5}%
  \selectfont}%
\fi\endgroup%
\begin{picture}(2234,1503)(1246,-2914)
\put(2296,-1636){\makebox(0,0)[lb]{\smash{{\SetFigFont{6}{7.2}{\rmdefault}{\mddefault}{\updefault}{\color[rgb]{0,0,0}the group of conjugacies.}%
}}}}
\put(2296,-1501){\makebox(0,0)[lb]{\smash{{\SetFigFont{6}{7.2}{\rmdefault}{\mddefault}{\updefault}{\color[rgb]{0,0,0}Orbits of the action of }%
}}}}
\put(1261,-1597){\makebox(0,0)[lb]{\smash{{\SetFigFont{6}{7.2}{\rmdefault}{\mddefault}{\updefault}{\color[rgb]{0,0,0}$\alpha\in\mathbb{C}$}%
}}}}
\put(2566,-2311){\makebox(0,0)[lb]{\smash{{\SetFigFont{6}{7.2}{\rmdefault}{\mddefault}{\updefault}{\color[rgb]{0,0,0}$\mathcal{F}_{\frac{3}{2}\frac{\sigma^{''}(0)}{\sigma^{'}(0)},\sigma}$}%
}}}}
\put(2223,-2872){\makebox(0,0)[lb]{\smash{{\SetFigFont{6}{7.2}{\rmdefault}{\mddefault}{\updefault}{\color[rgb]{0,0,0}$\sigma\in$Diff$(\mathbb{C},0)$}%
}}}}
\end{picture}%

\par\end{center}

As a consequence of the above description of the moduli space of absolutely
dicritical foliations, we should be able to prove the existence of
a non algebrizable absolutely dicritical foliation using technics
developped in \textbf{\cite{singnonag}. }

\section{Formal normal forms for $1$-Forms.}

It is known \cite{Caco} that the valuation of a $1$-form $\omega$
with an isolated singularity defining an absolutely dicritical foliation
of cusp type is $3$. Up to some linear change of coordinates, we
can suppose that the singular point of the foliation after one blowing-up
has $\left(0,0\right)$ for coordinates in the standard coordinates
associated to the blowing-up. Moreover, since the foliation is generically
transversal to the exceptionnal divisor of the blowing-up of $0\in\left(\mathbb{C}^{2},0\right)$,
the homogeneous part of degree $3$ of $\omega$ is tangent to the
radial form $\omega_{R}=xdy-ydx.$ Thus there exists an homogeneous
polynomial function of degree $2$ $P_{2}$ such that 

\[
\omega=P_{2}\omega_{R}+\sum_{i\geq4}\left(A_{i}\left(x,y\right)dx+B_{i}\left(x,y\right)dy\right).
\]
After one blowing-up, the singular locus is given by the solutions
of $P_{2}\left(1,y\right)=0$ and $P_{2}\left(x,1\right)=0$ in each
chart. Thus $P_{2}$ is simply written $ay^{2}$ for some constant
$a\neq0$. After on blowing-up $\left(x,t\right)\mapsto\left(x,tx\right)$,
the linear part near $\left(0,0\right)$ of the pull-back form is
written 
\[
\left(A_{4}\left(1,0\right)+t\frac{\partial A_{4}}{\partial t}\left(1,0\right)+tB_{4}\left(1,0\right)\right)dx+xB_{4}\left(1,0\right)dt+xA_{5}\left(1,0\right)dx.
\]
The absolutely dicritical property ensures that this linear part is
non trivial and tangent to the radial vector field $tdx+xdt$. Hence,
the following relations hold

\[
A_{4}\left(1,0\right)=A_{5}\left(1,0\right)=0\mbox{ and }\frac{\partial A_{4}}{\partial t}\left(1,0\right)+2B_{4}\left(1,0\right)=0
\]
Finally, the form $\omega$ is written 
\begin{eqnarray*}
\omega & = & y^{2}\omega_{R}+\left(-2\alpha x^{3}+yQ_{2}\left(x,y\right)\right)ydx+\left(\alpha x^{4}+yQ_{3}\left(x,y\right)\right)dy\\
 &  & \quad+\left(A_{5}\left(x,y\right)dx+B_{5}\left(x,y\right)dy\right)+\cdots
\end{eqnarray*}
where $\alpha\neq0$. 
\begin{prop}
The $1-$form $\omega$ is formally equivalent to a $1-$form written
\begin{eqnarray*}
y^{2}\omega_{R}+\alpha x^{3}\left(xdy-2ydx\right)+ax^{3}ydy\qquad\qquad\\
+\sum_{n\geq5}x^{n-1}\left(\left(a_{n}x+b_{n}y\right)dx+\left(c_{n}x+d_{n}y\right)dy\right)
\end{eqnarray*}
where $a_{5}=0$. Moreover, this formal normal form is unique up to
change of coordinates tangent to $\mbox{Id}$. \end{prop}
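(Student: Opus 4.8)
The plan is to classify $\omega$ by a graded homological (normal-form) argument. I filter the space of germs of $1$-forms by the degree of the polynomial coefficients and write $\omega=\omega_{0}+\sum_{N\ge4}\omega^{(N)}$, where $\omega_{0}=y^{2}\omega_{R}$ is the degree-$3$ part and $\omega^{(N)}$ is the homogeneous degree-$N$ part. Since the foliation is defined by $\omega=0$, the admissible transformations are the formal diffeomorphisms $\Phi$ tangent to the identity together with multiplication by a unit $U$; their linearization at the model $\omega_{0}$ sends an infinitesimal pair $(X,h)$ — a vector field $X$ vanishing to order $\ge2$ and a function $h$ — to the cocycle $\mathcal{L}_{X}\omega_{0}+h\,\omega_{0}$. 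Normalizing the degree-$N$ part thus reduces, modulo higher order, to solving a homological equation $\mathcal{L}_{X_{N-2}}\omega_{0}+h_{N-3}\,\omega_{0}=\omega^{(N)}-(\text{target})$, and the announced normal form is exactly a choice of complement to the image of this operator in each degree.

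For existence I would compute the operator with Cartan's formula $\mathcal{L}_{X}\omega_{0}=d(i_{X}\omega_{0})+i_{X}d\omega_{0}$, using $\omega_{0}=xy^{2}dy-y^{3}dx$ and $d\omega_{0}=4y^{2}\,dx\wedge dy$. Writing $X=P\partial_{x}+Q\partial_{y}$, one checks that the $dx$-coefficient of $\mathcal{L}_{X}\omega_{0}+h\omega_{0}$ is always divisible by $y^{2}$ and its $dy$-coefficient by $y$. Hence the monomials $x^{N}dx,\ x^{N-1}y\,dx$ and $x^{N}dy$ can never be produced: they are invariant directions, which accounts for the coefficients $a_{n},b_{n},c_{n}$. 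The one remaining direction $x^{N-1}y\,dy$ is governed by the single coefficient of $x^{N-2}$ in $Q$, through the computation $\mathcal{L}_{x^{N-2}\partial_{y}}\omega_{0}=(N-5)\,x^{N-2}y^{2}\,dx+2\,x^{N-1}y\,dy$, which couples $x^{N-1}y\,dy$ to $x^{N-2}y^{2}dx$ and lets one reduce the whole $dy$-part to the single monomial $d_{n}\,x^{N-1}y\,dy$. The vanishing factor $N-5$ signals the resonance at $N=5$, which must be handled on its own; there I expect the missing room to come from the residual gauge (the kernel of the operator in degree $4$) acting on the nonzero degree-$4$ jet $\alpha x^{3}(xdy-2ydx)$, which is precisely where $\alpha\neq0$ enters. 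Finally, the relations $A_{4}(1,0)=A_{5}(1,0)=0$ imposed by the absolutely dicritical condition force the invariant coefficient of $x^{5}dx$ to be zero, giving $a_{5}=0$. Iterating the resolution of the homological equation over all $N$ yields a formal change tangent to the identity carrying $\omega$ to the stated form.

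For uniqueness I would argue again degree by degree. If two such normal forms are conjugate by a transformation tangent to the identity (composed with a unit), then at the lowest degree where they differ the difference is simultaneously a coboundary, hence in the image of the homological operator, and a combination of normal-form monomials, hence in the chosen complement. As the complement meets the image only in $0$, the difference vanishes, the degree-$N$ part of the transformation is forced into the residual gauge, and an induction identifies all the coefficients $a_{n},b_{n},c_{n},d_{n}$.

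The main obstacle is the cohomological bookkeeping of the existence step: showing that the four families $x^{n}dx,\ x^{n-1}y\,dx,\ x^{n}dy,\ x^{n-1}y\,dy$ form a genuine complement to the image of $(X,h)\mapsto\mathcal{L}_{X}\omega_{0}+h\,\omega_{0}$ in every degree. This is not a mere divisibility statement, because the last monomial lies in a resonant pair with $x^{n-2}y^{2}dx$; controlling that pair — in particular its degeneration at $N=5$ and the compensating action of the residual automorphisms on the degree-$4$ jet, where $\alpha\neq0$ is essential — is the delicate point. Once the complement is pinned down, both existence and uniqueness follow from the standard formal iteration.
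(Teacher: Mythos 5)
Your proposal follows the same outline as the paper's proof (degree-by-degree reduction against $\omega_{0}=y^{2}\omega_{R}$, a four-monomial complement per degree, $a_{5}=0$ coming from the dicritical constraint $A_{5}\left(1,0\right)=0$ and the invariance of the $x^{5}dx$-direction), but your linearized cocycle is not the paper's. The paper writes the effect of $\phi_{n}=\mbox{Id}+\left(P_{n},Q_{n}\right)$ on the $(n+2)$-jet as $y^{2}\left(\cdots dx+\cdots dy\right)$, with \emph{both} components divisible by $y^{2}$, and reduces everything to the bijectivity of one operator $L$ on pairs of degree-$n$ polynomials; in that version there is no exceptional degree at all. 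Your computation — $dx$-component divisible by $y^{2}$, $dy$-component only by $y$, because of the term $2xyQ\,dy$ — is the correct one (a direct expansion of $\phi_{n}^{*}\left(y^{2}\omega_{R}\right)$ confirms it), and it is precisely what produces the coupling $\mathcal{L}_{x^{N-2}\partial_{y}}\omega_{0}=\left(N-5\right)x^{N-2}y^{2}dx+2x^{N-1}y\,dy$ and hence a genuine resonance at $N=5$ that the paper's argument never sees. So where you diverge from the paper, you are right; but that divergence creates obligations you have not discharged.

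The gap is the degree $N=5$ itself, and it is not a bookkeeping detail. (a) Existence: you only say you ``expect'' the missing direction $x^{3}y^{2}dx$ to be recovered from the residual gauge. It can be: the kernel of $\left(X,h\right)\mapsto\mathcal{L}_{X}\omega_{0}+h\omega_{0}$ in degree $4$ is $\left\{ \left(\ell E,-4\ell\right)\right\} $ with $\ell=q_{0}y+q_{1}x$ and $E$ the radial field, its action on the degree-$5$ jet is $\ell\,\omega_{4}+\left(i_{E}\omega_{4}\right)d\ell$, whose $x^{3}y^{2}dx$-coefficient equals $-2\alpha q_{0}+aq_{1}$, nonzero for suitable $\ell$ exactly because $\alpha\neq0$ — but this computation must be carried out, not presumed. (b) Uniqueness: here your argument actually fails. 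Your own formula at $N=5$ reads $\mathcal{L}_{x^{3}\partial_{y}}\omega_{0}=2x^{4}y\,dy$, so the monomial $x^{4}y\,dy$ lies \emph{in} the image of the homological operator; at that degree your complement meets the image in a line, and the step ``the complement meets the image only in $0$'' is false. Concretely, the tangent-to-identity map $\left(x,y\right)\mapsto\left(x,y+tx^{3}\right)$ changes nothing in degrees $3,4$ and in degree $5$ only replaces $d_{5}$ by $d_{5}+2t$; renormalizing the degrees $\geq6$ (where there is no resonance) then yields two distinct normal forms conjugated by a map tangent to the identity. Thus, with the correct linearization, $d_{5}$ is not an invariant: either the degree-$5$ complement must be modified (keep $x^{3}y^{2}dx$, kill $x^{4}y\,dy$), or the normal form must be taken modulo the residual action. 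Your uniqueness paragraph, as written, cannot close this, and it also shows that the uniqueness assertion inherited from the statement needs to be re-examined rather than re-proved.
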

\begin{proof}
The action of a change of coordinates $\phi_{n}:\left(x,y\right)\to\left(x,y\right)+\left(P_{n},Q_{n}\right)$
where $P_{n}$ and $Q_{n}$ are homogeneous polynomial functions of
degree $n$ does not modify the jet of order $n+1$ of $\omega$.
Moreover, the action on the homogeneous part of degree $n+2$ is written
\begin{eqnarray*}
J^{n+2}\left(\phi_{_{n}}^{*}\omega\right) & = & J^{n+2}\omega\\
 &  & +y^{2}\left(\left(x\frac{\partial Q_{n}}{\partial x}-y\frac{\partial P_{n}}{\partial x}+Q_{n}\right)dx+\left(x\frac{\partial Q_{n}}{\partial y}-x\frac{\partial P_{n}}{\partial x}+P_{n}\right)dy\right)
\end{eqnarray*}
We are going to verify that the linear morphism defined by 
\[
L:\left(P_{n},Q_{n}\right)\mapsto\left(x\frac{\partial Q_{n}}{\partial x}-y\frac{\partial P_{n}}{\partial x}+Q_{n},x\frac{\partial Q_{n}}{\partial y}-x\frac{\partial P_{n}}{\partial x}+P_{n}\right)
\]
from the set of couples of homogeneous polynomial functions of degree
$n$ to itself is a one to one correspondance. To do so, let us compute
the kernel of this morphism and let us write $P_{n}=\sum_{i=0}^{n}p_{i}x^{i}y^{n-i}$
and \foreignlanguage{french}{$Q_{n}=\sum_{i=0}^{n}q_{i}x^{i}y^{n-i}$.}
The coefficients of the components of $L\left(P_{n},Q_{n}\right)$
on the monomial term $x^{i}y^{n-i}$ are 
\begin{eqnarray*}
q_{i}\left(i-1\right)-p_{i+1}\left(i+1\right)\quad i & = & 0..n-1\\
q_{n}\left(n-1\right)\quad i & = & n\mbox{ \qquad and}\\
-p_{i}\left(n-i-1\right)+q_{i-1}\left(n-i+1\right)\quad i & = & 1..n\\
p_{0}\left(n-1\right)\quad i & = & 0.
\end{eqnarray*}
If $\left(P_{n},Q_{n}\right)$ is in the kernel then $q_{n}=0$ and
$p_{0}=0$. Moreover, applying the above relation with $i=1$ and
$i=n-1$ yields $p_{2}=0$ and $q_{n-2}=0.$ Now for $i=1..n-1$ but
$i\neq n-2$, a combination of the relations above ensures that 
\begin{eqnarray*}
0=q_{i}\left(i-1\right)-q_{i}\left(i+1\right)\frac{n-i}{n-i-2} & = & \frac{q_{i}}{n-i-2}\left(2-2n\right)
\end{eqnarray*}
Thus $q_{i}=0$ for $i=0..n-1$. Therefore $\left(P_{n},Q_{n}\right)=0$
and $L$ is an isomorphism. Thus, we can choose $\phi_{n}$ such that
\[
J^{n+2}\left(\phi_{_{n}}^{*}\omega\right)=x^{n-1}\left(\left(a_{n}x+b_{n}y\right)dx+\left(c_{n}x+d_{n}y\right)dy\right).
\]
 Clearly the composition $\phi_{2}\circ\phi_{3}\circ\cdots$ is formally
convergent, which proves the proposition.
\end{proof}

\section{Absolutely dicritical foliation admitting a first integral.}

In this section, we study absolutely dicritical foliations that admit
a meromorphic first integral. Such an existence can be completely
red on the transverse structure. 
\begin{thm}
Let $\mathcal{F}$ be an absolutely dicritical foliation of cusp type
with $\sigma$ as transverse structure. Then $\mathcal{F}$ admits
a first integral if and only if there exists two non constant rationnal
functions $R_{1}$ and $R_{2}$ such that 
\[
R_{1}\circ\sigma=R_{2}.
\]

\end{thm}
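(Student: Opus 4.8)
The statement to prove is a characterization: the foliation $\mathcal{F}$ (absolutely dicritical of cusp type, with transverse structure $\sigma$) admits a meromorphic first integral if and only if there exist two non-constant rational functions $R_1,R_2$ with $R_1\circ\sigma=R_2$. The plan is to work on the resolution $E^{*}\mathcal{F}$ and to exploit the gluing description from the previous sections, where $\mathcal{F}$ is built from the two model foliations $\mathcal{F}_2$ and $\mathcal{F}_1$ glued along the intersection of the two divisor components. A meromorphic first integral of $\mathcal{F}$ pulls back to a meromorphic first integral $F$ of $E^{*}\mathcal{F}$ that is constant on each leaf and restricts to non-constant meromorphic functions on each component of the divisor. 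Since each component $D_i\simeq\mathbb{P}^{1}$ is transverse to the foliation and parametrizes the space of local leaves near it, the first integral $F$ is determined on a neighborhood of $D_i$ by its restriction $F|_{D_i}$, which is a rational function of the leaf-parameter (rationality coming from the fact that $F$ is a global meromorphic function on $\mathbb{P}^{1}$).

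The central idea is that the transversal structure $\sigma\in\mbox{Diff}((D_2,p),(D_1,p))$ is precisely the holonomy-type identification between the leaf-parameter read on $D_2$ and the leaf-parameter read on $D_1$: a point $x\in D_2$ and $\sigma(x)\in D_1$ lie on the same leaf of $E^{*}\mathcal{F}$. First I would observe that a first integral $F$ of $E^{*}\mathcal{F}$ induces, by restriction to the two divisor components, two meromorphic functions $R_2:=F|_{D_2}$ and $R_1:=F|_{D_1}$, viewed as functions of the respective leaf-coordinates. Because $F$ is globally defined and constant along leaves, and because $x$ and $\sigma(x)$ sit on a common leaf, the compatibility $R_2(x)=F(x)=F(\sigma(x))=R_1(\sigma(x))$ must hold, i.e. $R_2=R_1\circ\sigma$ on the common domain. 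Since $R_1,R_2$ are restrictions of a global meromorphic function to $\mathbb{P}^{1}$, they are rational; and they are non-constant precisely because $F$ is a genuine (non-constant) first integral transverse to each component. This gives the forward implication.

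For the converse, I would start from a relation $R_1\circ\sigma=R_2$ with $R_1,R_2$ non-constant rational and build a first integral by the reverse construction. Declare $F$ on a neighborhood of $D_2$ to be $R_2$ pushed along the leaves (constant on each leaf, equal to $R_2$ on $D_2$) and on a neighborhood of $D_1$ to be $R_1$ pushed along the leaves. Each is a well-defined meromorphic first integral locally, since the foliation is transverse to each component and the component parametrizes the nearby leaves. The relation $R_1\circ\sigma=R_2$ is exactly the condition that these two locally defined functions agree on leaves that meet both neighborhoods near $p$, so they glue into a single meromorphic first integral of $E^{*}\mathcal{F}$ on a full neighborhood of the divisor. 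Finally I would push this down through $E$: a meromorphic function on a neighborhood of the exceptional divisor blows down to a meromorphic function on $(\mathbb{C}^{2},0)$, yielding a meromorphic first integral of $\mathcal{F}$.

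The main obstacle I anticipate is the gluing step in the converse direction, and more precisely the matching of the \emph{domains of definition and of the pole/indeterminacy loci}. One must check that the two locally constructed first integrals, which agree along leaves near $p$ by the Schwarzian-free relation $R_1\circ\sigma=R_2$, genuinely extend to meromorphic functions on the respective tubular neighborhoods and that their indeterminacy sets are compatible so that the glued object is meromorphic (not merely leafwise-constant) across the divisor; the transversality of $E^{*}\mathcal{F}$ to each $D_i$ and the model descriptions of $\mathcal{F}_1,\mathcal{F}_2$ are what make this work, but verifying that the extension stays meromorphic and blows down correctly is the delicate point. A secondary subtlety is ensuring the resulting $R_1,R_2$ in the forward direction are \emph{non-constant}: this uses that $F$ is transverse to the divisor components, so its restriction to each $D_i\simeq\mathbb{P}^{1}$ cannot be constant without $F$ being constant on a neighborhood, contradicting that it is a first integral of a dicritical foliation whose leaves sweep out the divisor.
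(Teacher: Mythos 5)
Your proof is correct, and the forward implication is exactly the paper's argument: pull back the first integral by $E$, restrict to the two divisor components to get rational functions, and use the defining property of $\sigma$ (you even supply the non-constancy argument, which the paper leaves implicit). Your converse, however, takes a genuinely different route. The paper does not glue leafwise extensions directly on the resolved surface: it first invokes its normalization theorem to replace $\mathcal{F}$ by a normal form $\mathcal{F}_{\alpha,\gamma}$, in which the resolved foliation is \emph{by construction} the two models $\mathcal{F}_{1},\mathcal{F}_{2}$ glued by the explicit cocycle $\left(x_{1},y_{1}\right)\mapsto\left(x_{1}\left(1+\alpha y_{1}\right)+\gamma\left(y_{1}\right),\gamma\left(y_{1}\right)\right)$; since the transverse structure $\gamma$ of the normal form agrees with $\sigma$ only up to homographies, $\gamma=h_{0}\circ\sigma\circ h_{1}$, the paper replaces $R_{1},R_{2}$ by $\tilde{R}_{1}=R_{1}\circ h_{0}^{-1}$ and $\tilde{R}_{2}=R_{2}\circ h_{1}$, after which the candidate first integrals $\tilde{R}_{2}\left(y_{1}\right)$ and $\tilde{R}_{1}\left(y_{3}\right)$ glue by a one-line computation from $\tilde{R}_{1}\circ\gamma=\tilde{R}_{2}$. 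You instead work directly on $E^{*}\mathcal{F}$: extend $R_{2}$ (resp. $R_{1}$) leafwise on a tubular neighborhood of $D_{2}$ (resp. $D_{1}$), using the model structure of foliated neighborhoods recalled in Section 1.1 to guarantee these extensions are meromorphic, glue them near the corner via the germ relation $R_{1}\circ\sigma=R_{2}$, and blow down. Both arguments are sound, and they rest on the same underlying structure theorem for neighborhoods of the components. What the paper's detour buys is that the analytic points you rightly flag as delicate (meromorphy of the leafwise extension, identification of leaf parameters with divisor coordinates, agreement on the whole overlap) are absorbed into a result already proved, leaving only an algebraic identity to check; what your route buys is independence from the normalization theorem -- you need only the local models and the definition of $\sigma$ -- at the price of verifying those analytic points by hand.
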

Notice that the existence of $R_{1}$ and $R_{2}$ does not depend
on the equivalence class of $\sigma$ modulo homographies. 
\begin{proof}
Suppose that $\mathcal{F}$ admits a meromorphic first integral $f$.
After blowing-up, the function $f$ is a non constant rationnal function
in restriction to each component of the divisor. Since for any point
$p$, $p$ and $\sigma\left(p\right)$ belongs to the same leaf, we
have 
\[
\left.f\right|_{D_{1}}\left(p\right)=\left.f\right|_{D_{2}}\left(\sigma\left(p\right)\right).
\]
Now, suppose there exist two rationnal function as in the lemma. According
to some previous result, there exists $\alpha$ and $\gamma$ such
that the foliation $\mathcal{F}$ is analytically equivalent to $\mathcal{F}_{\alpha,\gamma}.$
The application $\sigma$ and $\gamma$ are linked by a relation of
the form 
\[
h_{0}\circ\sigma\circ h_{1}=\gamma
\]
where $h_{0}$ and $h_{1}$ are homographies. Thus, setting $\tilde{R}_{1}=R_{1}\circ h_{0}^{-1}$
and $\tilde{R}_{2}=R_{2}\circ h_{1}$ yields $\tilde{R}_{1}\circ\gamma=\tilde{R}_{2}$
where $\tilde{R}_{1}$ and $\tilde{R}_{2}$ are still rationnal. Now,
let us go back to the construction of $\mathcal{F}_{\alpha,\gamma}.$
We glue the models $\mathcal{F}_{1}$ and $\mathcal{F}_{2}$ around
$\left(x_{1},y_{1}\right)=0$ and $\left(x_{3},y_{3}\right)=0$ by
\[
\left(x_{1},y_{1}\right)\mapsto\left(x_{3}=x_{1}\left(1+\alpha y_{1}\right)+\gamma\left(y_{1}\right),y_{3}=\gamma\left(y_{1}\right)\right)
\]
Consider for $\mathcal{F}_{1}$ the first integral $F_{1}\left(x_{1},y_{1}\right)=\tilde{R}_{2}\left(y_{1}\right)$
and for $\mathcal{F}_{2}$ the first integral $F_{2}\left(x_{3},y_{3}\right)=\tilde{R}_{1}\left(y_{3}\right)$.
Then these two meromorphic first integrals can be glued in a global
meromorphic first integral since 
\[
F_{2}\left(x_{3},y_{3}\right)=F_{2}\left(x_{1}\left(1+\alpha y_{1}\right)+\gamma\left(y_{1}\right),\gamma\left(y_{1}\right)\right)=\tilde{R}_{1}\left(\gamma\left(y_{1}\right)\right)=\tilde{R}_{2}\left(y_{1}\right)=F_{1}\left(x_{1},y_{1}\right).
\]
Thus the absolutely dicritical foliation admits a meromorphic first
integral. 
\end{proof}
In view of this result, it is easy to produce a lot of examples of
absolutely dicritical foliation admitting no meromorphic first integral
setting for instance 
\[
\sigma\left(z\right)=e^{z}-1.
\]
Notice that the existence of the first integral depends only on the
transversal structure $\sigma$ and not on the value of the moduli
of Mattei $\alpha.$ This is consistent with the fact that along an
equireducible unfolding the existence of a meromorphic first integral
for one foliation in the deformation ensures the existence of such
a first integral for any foliation in the deformation. 

Finally, since the topologically classification of absolutely dicritical
foliations is \emph{trivial, }the above result produce a lot of examples
of couples of conjugated foliations such that only one of them admits
a meromorphic first integral. 

Hereafter we treated a special case, that is when the transversal
structure $\sigma$ is an homography.
\begin{prop}
\label{prop:Let--be}Let $\mathcal{F}$ be an absolutely dicritical
foliation of cusp type with an homographic transversal structure.
Then, up to some analytical change of coordinates, $\mathcal{F}$
admits one of the following rationnal first integrals:
\begin{enumerate}
\item $f=\frac{y^{2}+x^{3}}{xy}.$
\item $f=\frac{y^{2}+x^{3}}{xy}+x$
\end{enumerate}
\end{prop}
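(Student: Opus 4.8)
The plan is to combine the first-integral criterion just established with the explicit normal forms $\mathcal{F}_{\sigma,\alpha}$ and the fundamental relation of Proposition (\ref{prop:Two-normal-forms}). First I would observe that $\mathcal{F}$ automatically carries a rational first integral: a homography $\sigma$ is a nonconstant rational map, so the criterion of the preceding theorem applies verbatim with $R_{1}(z)=z$ and $R_{2}=\sigma$, which satisfy $R_{1}\circ\sigma=R_{2}$. Better still, the proof of that theorem is constructive: it glues the first integral $F_{2}=R_{1}(y_{3})$ of $\mathcal{F}_{1}$ to $F_{1}=R_{2}(y_{1})$ of $\mathcal{F}_{2}$. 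Hence I already have a concrete global meromorphic first integral of $\mathcal{F}$ that only needs to be blown down and put in closed form.

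Next I would reduce $\mathcal{F}$ to a single modulus. By the normalization theorem, $\mathcal{F}\sim\mathcal{F}_{\sigma,\alpha}$ with $\sigma$ a homography; since the Schwarzian of a homography vanishes, the fundamental relation (\ref{eq:relation-fond}) lets me pick homographies $h_{0},h_{1}$ bringing $\sigma$ to the identity, so that $\mathcal{F}\sim\mathcal{F}_{\mathrm{id},\alpha}$. The only surviving invariant is then the Mattei modulus $\alpha$, exactly as Corollary (\ref{cor:More-generally,-for}) predicts (the monomial-in-$y_{1}$ part of the gluing cocycle). Finally I would exploit the scaling automorphisms $(x_{i},y_{i})\mapsto(\epsilon x_{i},\epsilon y_{i})$ of the models $\mathcal{F}_{1},\mathcal{F}_{2}$, which act on $\mathcal{F}_{\mathrm{id},\alpha}$ by rescaling $\alpha$; their two orbits, $\alpha=0$ and $\alpha\neq0$ (normalized to a single value), are precisely the two alternatives of the statement.

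It then remains to put the two resulting first integrals in closed form, and this is where the real work lies. For $\mathcal{F}_{\mathrm{id},0}$ the gluing cocycle is $(x_{3},y_{3})=(x_{1}+y_{1},y_{1})$, so $y_{1}=y_{3}$ is a global first integral; matching the two model charts with the two successive blowing-up charts, one sees that $y_{1}$ reads as $t+\xi$ in the standard coordinates near $p$ and descends to $f=\frac{y^{2}+x^{3}}{xy}$ on $\left(\mathbb{C}^{2},0\right)$, which is case (1). For the nonzero value of $\alpha$ the extra factor $\Phi_{\alpha}$ inserts an $x$-linear term into the cocycle, which under the same blow-down adds exactly $x$, giving $f=\frac{y^{2}+x^{3}}{xy}+x$, i.e.\ case (2). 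The main obstacle is this explicit two-fold blow-down: pinning the correspondence between the coordinates $(x_{i},y_{i})$ of the models and the blowing-up charts precisely enough to recognize the descended rational function, and verifying that the $\alpha$-twist contributes the term $x$ and nothing of higher order once $\alpha$ has been normalized. Every other ingredient is supplied by the normalization theorem, Proposition (\ref{prop:Two-normal-forms}) and Corollary (\ref{cor:More-generally,-for}).
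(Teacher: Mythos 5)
Your reduction steps are correct and follow a genuinely different route from the paper's: invoking the normalization theorem to write $\mathcal{F}\sim\mathcal{F}_{\sigma,\alpha}$, using Proposition \ref{prop:Two-normal-forms} with $\gamma=\mbox{Id}$ to pass to $\mathcal{F}_{\mathrm{id},\alpha'}$ (any homography factors as $h_{1}\circ h_{0}$, and the second relation of (\ref{eq:relation-fond}) is then solved for $\alpha'$), and using the model scalings to rescale $\alpha'$ into $\{0,1\}$ are all sound. The gap is exactly the step you flag as ``where the real work lies'': you never prove that $\mathcal{F}_{\mathrm{id},0}$ and $\mathcal{F}_{\mathrm{id},1}$ are the foliations of $\frac{y^{2}+x^{3}}{xy}$ and $\frac{y^{2}+x^{3}}{xy}+x$; you assert it. Worse, the way you phrase it --- the glued first integral ``descends to $f=\frac{y^{2}+x^{3}}{xy}$'', the $\alpha$-twist ``adds exactly $x$'' under ``the same blow-down'' --- presupposes a canonical, explicit blow-down of the abstract glued surface. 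There is none: the identification of the gluing with a neighborhood of the exceptional divisor of two blow-ups is furnished by Grauert's theorem, which is purely existential, so no closed formula can be read off from the abstract side. Moreover the twist does not contribute ``exactly $x$ and nothing of higher order'': when the computation is actually done, higher-order terms do appear in the cocycle and must be removed by Corollary \ref{cor:More-generally,-for}; they are not absent.

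The step can be closed, but the argument must run in the opposite direction: resolve the two explicit functions and compute \emph{their} cocycles. In the chart $\left(x,y\right)=\left(\xi\tau,\xi\tau^{2}\right)$ of $E$ one finds $E^{*}\frac{y^{2}+x^{3}}{xy}=\xi+\tau$, and taking $x_{1}=\xi$, $y_{1}=E^{*}f$ near $D_{2}$ and $x_{3}=\tau$, $y_{3}=E^{*}f$ near $D_{1}$ (these extend to global model charts along each component precisely because $E^{*}f$ is a global first integral) yields the cocycle $\left(x_{1},y_{1}\right)\mapsto\left(y_{1}-x_{1},y_{1}\right)$, which after the model automorphism $\left(x_{1},y_{1}\right)\mapsto\left(-x_{1},y_{1}\right)$ is $g_{\mathrm{id}}\circ\Phi_{0}$; the key remark (\ref{Key-remark.-Two}) then gives the identification with $\mathcal{F}_{\mathrm{id},0}$. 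For $f_{1}=\frac{y^{2}+x^{3}+x^{2}y}{xy}$ the same charts give $E^{*}f_{1}=\xi+\tau+\xi\tau$ and the cocycle $\left(x_{1},y_{1}\right)\mapsto\left(y_{1}-x_{1}\frac{1+y_{1}}{1+x_{1}},y_{1}\right)$, whose $y_{1}$-coefficient in the $x_{1}$-factor is nonzero; Corollary \ref{cor:More-generally,-for} kills the remaining terms and identifies it with $\mathcal{F}_{\mathrm{id},1}$. Note that the paper proceeds quite differently and avoids these chart computations: it proves that $f_{z}=\frac{y^{2}+x^{3}+zx^{2}y}{xy}$ is a semi-universal equisingular unfolding (non-triviality via an ideal-membership computation plus $\dim H^{1}\left(D,\Theta\right)=1$), links $\mathcal{F}$ to the foliation of $f_{0}$ by an unfolding built from an isotopy of gluing automorphisms, extends the meromorphic first integral through that unfolding to get one of the form $\frac{y^{2}+x^{3}+\Delta}{xy}$ for $\mathcal{F}$, and concludes with the quasi-homogeneous scaling $\Lambda_{\lambda}$ and semi-universality. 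Your route is viable, arguably more elementary (no unfolding theory beyond Corollary \ref{cor:More-generally,-for}), but as written it is missing its decisive identification.
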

\begin{proof}
Let us consider the following germ of family of meromorphic functions
with $\left(x,y,z\right)\in\left(\mathbb{C}^{3},\left(0,0,0\right)\right)$
defined by 
\[
f_{z}=\frac{y^{2}+x^{3}+zx^{2}y}{xy}=\frac{a}{b}.
\]
For any $z$, the foliation associated to $f_{z}$ is absolutely dicritical
of cusp type. Let us prove that this family is an equireducible unfolding.
We consider the integrable $1-$form $\Omega=adb-bda$. It is written
\[
\left(2x^{3}y+zx^{2}y^{2}-y^{3}\right)dx+\left(xy^{2}-x^{4}\right)dy+x^{3}y^{2}dz.
\]
It defines an unfolding of the foliation given by $f_{0}$ with one
parameter. Its singular locus is the $z-$axes and it is transversal
to the fibers of the fibration $\left(x,y,z\right)\mapsto z$. Once
we blow-up the $z-$axe, in the chart $E:\left(x,t,z\right)=\left(x,tx,z\right)$,
the $1-$form $\Omega$ is written 
\[
\tilde{\Omega}=t\left(1-zt\right)dx+\left(t^{2}-x\right)dt+t^{2}xdz.
\]
Therefore, the singular locus of the pull-back foliation is still
the $z-$axe in the coordinates $\left(x,t,z\right)$ and in a neighborhood
of $x=0$ the foliation $\tilde{\Omega}$ is transverse to the fibration
$z=cst.$ If we blow-up again the $z$-axe we find
\[
\left(1-zx\right)dt+\left(1-zt\right)dx+txdz
\]
which is smooth. Since the curve $x=t=0$ is invariant and since the
foliation is still transverse to the fibration $z=cts,$ the unfolding
is equisingular. Now, this unfolding is analytically trivial if and
only if the monomial term $x^{3}y^{2}$ belongs to the ideal generated
by $2x^{3}y+zx^{2}y^{2}-y^{3}$ and $xy^{2}-x^{4}$ \cite{MC}. Setting
$z=0$ this would imply that $x^{3}y^{2}\in\left(2x^{3}y-y^{3},xy^{2}-x^{4}\right)$
which is impossible. Thus, this unfolding is not analytically trivial
and since the moduli space of unfolding of absolutely dicritical foliations
is of dimension $1$, it is also semi-universal.

Now, let us consider a foliation $\mathcal{F}$ as in the proposition.
Up to some linear change of coordinate, we can suppose that after
the reduction process its singular point and its transversal structure
are the same as the function $\frac{x^{2}+y^{3}}{xy}$ that is to
say $\left(0,0\right)$ and $\mbox{Id }$in the standard coordinates
associated to the reduction process. Let us denote by $\mathcal{F}_{0}$
the foliation given by $\frac{x^{2}+y^{3}}{xy}$. We are going to
construct an unfolding from $\mathcal{F}_{0}$ to $\mathcal{F}$.
As always since the beginning of this article, we denote by $D_{1}$
and $D_{2}$ the two exceptionnal component of the divisor. In the
neighborhood of each of them, both foliation are purely radial. Thus
there exists two conjugacy $\Phi_{1}$ and $\Phi_{2}$ defined in
the neighborhood of respectively $D_{1}$ and $D_{2}$ such that 
\[
\begin{array}{c}
\Phi_{1}^{*}\mathcal{F}_{0}=\mathcal{F}\qquad\Phi_{2}^{*}\mathcal{F}_{0}=\mathcal{F}\\
\left.\Phi_{1}\right|_{D_{1}\cup D_{2}}=\mbox{Id}\qquad\left.\Phi_{2}\right|_{D_{1}\cup D_{2}}=\mbox{Id}
\end{array}.
\]
Since, $\mathcal{F}_{0}$ and $\mathcal{F}$ have the same transversal
structures, the cocycle $\Phi_{1}\circ\Phi_{2}^{-1}$ is a germ automorphism
of $\mathcal{F}_{0}$ near the singular point of the divisor that
lets fix the points of the divisor and that let globally fix each
leaf. It is easy to see that one can construct an isotopy from $\Phi_{1}\circ\Phi_{2}^{-1}$
to $\mbox{Id}$ in the group of germs of automorphisms of $\mathcal{F}_{0}$
near the singular point of the divisor that let fix each point of
the divisor and that let globally fix each leaf. Let us denote by
$\Phi_{t}$ this isotopy satisfying $\Phi_{0}=\mbox{Id}$ and $\Phi_{1}=\Phi_{1}\circ\Phi_{2}^{-1}$.
The unfolding defined by the following glued construction 
\[
\EQ{\left(\left(\mathcal{F}_{0},D_{1}\right)\times U\right)\coprod\left(\left(\mathcal{F}_{0},D_{2}\right)\times U\right)}{\left(x,t\right)\sim\left(\Phi_{t}\left(x\right),t\right)}.
\]
where $U$ is an open neighborhood of $\left\{ \left|t\right|\leq1\right\} $
links $\mathcal{F}_{0}$ and $\mathcal{F}$. The meromophic first
integral $f_{0}$ of $\mathcal{F}_{0}$ can be extended in a meromorphic
first integral $F$ of the whole unfolding \cite{MC}. Thus $\left.F\right|_{t=1}$
is a meromorphic first integral of $\mathcal{F}$. By equisingularity$\left.F\right|_{t=0}$
and $\left.F\right|_{t=1}$ must have exactly the same number of irreducible
components in their zeros and in their poles, which is the same number
of irreducible components in the zeros and in the poles of $F$. They
also must have the same topology since an unfolding is topologically
trivial. Thus the foliation $\mathcal{F}$ admits a meromorphic first
integral whose zero is exactly the leaf passing through the singular
point of the exceptionnal divisor and whose poles are the union of
two smooth curves attaching respectively to $D_{1}$ and $D_{2}.$
Thererfore up to some change of coordinates, we can suppose that $\mathcal{F}$
has a meromorphic first integral of the form 

\[
f=\frac{\left(y^{2}+x^{3}+\Delta\left(x,y\right)\right)^{a}}{x^{b}y^{c}}
\]
where the Taylor expansion of $\Delta\left(x,y\right)$ admits monomial
term $x^{i}y^{j}$ with $2i+3j>6$. The absolutely dicritical property
ensures that $a=b=c.$ Therefore, we can suppose that $a=b=c=1.$
Let us denote by $\Lambda_{\lambda}\left(x,y\right)$ the homothetie
$\Lambda_{\lambda}\left(x,y\right)=\left(\lambda^{2}x,\lambda^{3}y\right)$.
Composing by $\Lambda_{\lambda}$ at the right of $f$ yields 
\[
\frac{f\circ\Lambda_{\lambda}}{\lambda}=\frac{y^{2}+x^{3}+\Delta_{\lambda}\left(x,y\right)}{xy}
\]
For any $\lambda\neq0$, the foliation given by $f$ and by $\frac{f\circ\Lambda_{\lambda}}{\lambda}$
are analytically conjugated. But the deformation given by $\lambda\to\frac{f\circ\Lambda_{\lambda}}{\lambda}$
is an equisingular unfolding of $f_{0}$ since $\Delta_{\lambda}$
goes to $0$ when $\lambda\to0$. Using the semi-universality of the
family introduced at the beginning of the proof, for $\lambda$ small
enough, there exists some $\alpha$ such that the following conjugacies
holds 
\[
f\sim\frac{f\circ\Lambda_{\lambda}}{\lambda}\sim f_{\alpha}.
\]
Now if $\alpha=0$ then $f$ is of type $\left(1\right)$. If $\alpha\neq0$,
applying some well-chosen homothetie, we can suppose $\alpha=1$.
And $f$ is of type $\left(2\right)$. \end{proof}
\begin{rem}
In the last part of this article, we will prove that actually the
two functions $\left(1\right)$ and $\left(2\right)$ of the previous
result define two foliations analytically equivalent. 
\end{rem}
It is possible to construct some others examples of absolutely dicritical
foliations of cusp type with a rationnal first integral: to do so,
consider a foliation of degree $1$ on $\mathbb{P}^{2}$. These are
well-known \cite{CerDeBel}: they have three singular points counted
with multiplicities and admit an integrating factor. For instance,
the foliation given in homogeneous coordinates by the multivalued
functions 
\[
\left[x:y:z\right]\to\frac{x^{\alpha}y^{\beta}}{z^{\alpha+\beta}}\quad\mbox{ or }\left[x:y:z\right]\mapsto\frac{Q}{z^{2}}
\]
where $Q$ is a non-degenerate quadratic form is of degree $1.$ When
$\alpha$ and $\beta$ are rationnal numbers, the foliation admits
a rationnal first integral. Now consider two generic lines $L_{1}$
and $L_{2}$. Each of them is tangent to one leaf of the foliation.
We can suppose that the tangency point is different from the intersection
point of $L_{1}$ and $L_{2}.$ Now, blow-up twice the tangency point
on $L_{1}$ and thrice the tangency point on $L_{2}$. The final configuration
is the following 

\noindent \begin{center}
\includegraphics[scale=0.4]{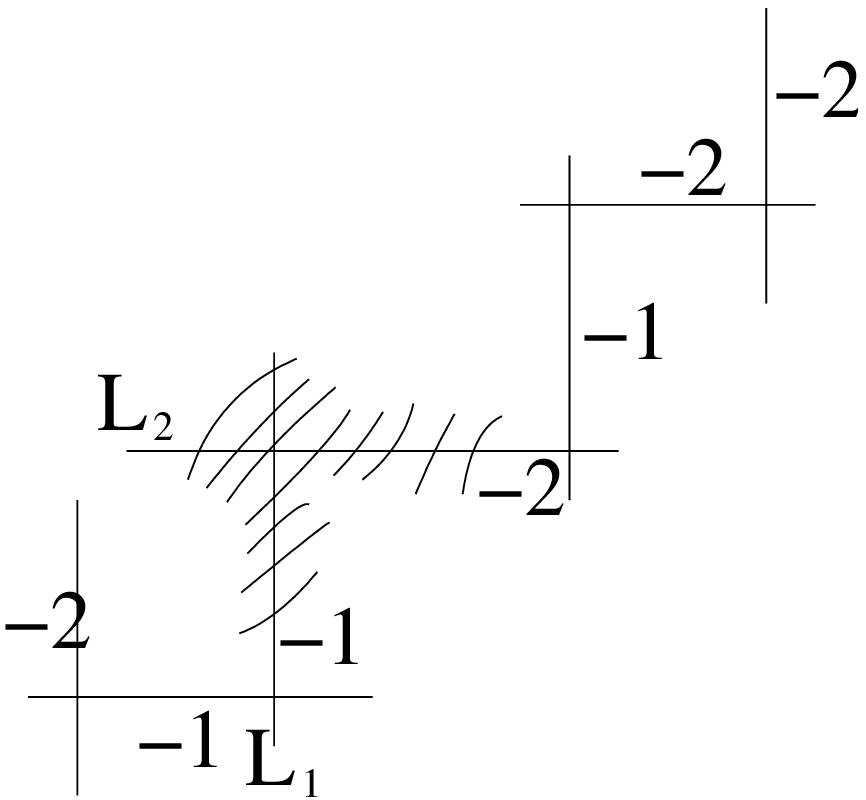}
\par\end{center}

Thus, the divisor $L_{1}\cup L_{2}$ can be contracted toward a smooth
algebraic manifold. The obtained singularity is naturally absolutely
dicritical of cusp type and admits a rationnal first integral. For
instance, if we consider the foliation given in affine coordinates
by $xy=cst$ and $L_{1}:x+y=1$ and $L_{2}:x-y=1$, the transverse
structure is equivalent to $\sigma\left(t\right)=t+1$ and thus the
foliation is equivalent to the functions of proposition (\ref{prop:Let--be}).
However, considering the foliation given by $x+y^{2}=cst$ yields
the transverse structure $t\mapsto\frac{1-\sqrt{1+12t+4t^{2}}}{2}$
which is not an homography.

\section{Moduli of Mattei.}

\subsection{The parameter space of the unfoldings.}

As already explain, the deformation $\alpha\to\mathcal{F}_{\alpha,\sigma}$
is an unfolding with a set of paramater equal to $\mathbb{C}$. It
is a natural problem to ask if two parameters define two foliations
analytically equivalent. In order to do so, we introduced the following
definition:
\begin{defn}
Let $\sigma$ be an element of $\mbox{Diff}\left(\mathbb{C},0\right)$.
An homography $h$ with $h\left(0\right)=0$ is called an homographic
symetry of $f$ if and only if there exists an homography $h_{1}$
such that 
\begin{equation}
h_{1}\circ\sigma\circ h=\sigma.\label{eq:Homo}
\end{equation}
We denote by $\mathcal{H}\left(f\right)$ the group of homographic
symetries of $f$. 
\end{defn}
The following result is probably known but we do not find any reference
in the litterature.
\begin{lem}
If $\mathcal{H}\left(f\right)$ is infinite then $f$ is an homography
and $\mathcal{H}\left(f\right)$ is the whole set of homographies
fixing the origin.\end{lem}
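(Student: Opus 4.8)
The plan is to reduce the whole statement to the behaviour of the Schwarzian derivative $\mathcal{S}$ under the symmetry relation, exploiting its two standard properties: it vanishes exactly on homographies, and it satisfies the cocycle rule $\mathcal{S}(u\circ v)=(\mathcal{S}(u)\circ v)\,(v')^{2}+\mathcal{S}(v)$ (these are precisely the properties already used to build the moduli space). First I would record two elementary structural facts. The set $\mathcal{H}(f)$ is a group: if $h_1\circ f\circ h=f$ and $g_1\circ f\circ g=f$, then rewriting $f\circ h=h_1^{-1}\circ f$ and $f\circ g=g_1^{-1}\circ f$ gives $f\circ(g\circ h)=(h_1\circ g_1)^{-1}\circ f$, so $g\circ h\in\mathcal{H}(f)$, and similarly for inverses. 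Moreover, since every $h\in\mathcal{H}(f)$ fixes $0$, the map $\lambda\colon\mathcal{H}(f)\to\mathbb{C}^{*}$, $\lambda(h)=h'(0)$, is a group homomorphism because $(g\circ h)'(0)=g'(0)\,h'(0)$. The strategy is then to show that if $f$ is \emph{not} a homography, $\mathcal{H}(f)$ is forced to be finite; the converse direction is immediate.

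The key step is to translate $h_1\circ f\circ h=f$ (with $h,h_1$ homographies) into an invariance of the quadratic differential $\mathcal{S}(f)\,dz^{2}$. Applying $\mathcal{S}$ to both sides: post-composition by the homography $h_1$ leaves $\mathcal{S}$ unchanged, while pre-composition by $h$ gives, via the cocycle rule and $\mathcal{S}(h)=0$,
\[
\mathcal{S}(f)(z)=\mathcal{S}(f)\bigl(h(z)\bigr)\,h'(z)^{2}.
\]
Suppose $f$ is not a homography, so $\phi:=\mathcal{S}(f)\not\equiv 0$; write $\phi(z)=\phi_{0}z^{k}+\cdots$ with $\phi_{0}\neq 0$ and $k\geq 0$. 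Comparing the lowest-order term $z^{k}$ in the invariance relation, using $h(z)=\lambda z+\cdots$ and $h'(z)^{2}=\lambda^{2}+\cdots$ with $\lambda=h'(0)$, yields $\phi_{0}=\phi_{0}\lambda^{k+2}$, hence $\lambda^{k+2}=1$. Thus the image of $\lambda$ lies in the finite group $\mu_{k+2}$ of $(k+2)$-th roots of unity. It remains to control $\ker\lambda$, i.e. symmetries with $h'(0)=1$, which are exactly the parabolic homographies $h_{\beta}(z)=z/(\beta z+1)$, for which $h_{\beta}'(z)^{2}=(\beta z+1)^{-4}$ and the relation reads $\phi(z)(\beta z+1)^{4}=\phi\bigl(z/(\beta z+1)\bigr)$. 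Expanding both sides and comparing the coefficient of $z^{k+1}$ gives $4\beta\phi_{0}=-k\beta\phi_{0}$, i.e. $(k+4)\beta\phi_{0}=0$, forcing $\beta=0$. Hence $\ker\lambda=\{\mbox{Id}\}$ and $\mathcal{H}(f)\cong\lambda(\mathcal{H}(f))\subseteq\mu_{k+2}$ is finite, contradicting the hypothesis.

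Therefore $\phi\equiv 0$ and $f$ is a homography. For the converse, if $f$ is a homography then any homography $h$ with $h(0)=0$ belongs to $\mathcal{H}(f)$: setting $h_{1}=f\circ h^{-1}\circ f^{-1}$ (a homography fixing $0$, since $f$ fixes $0$), one checks directly that $h_{1}\circ f\circ h=f\circ h^{-1}\circ f^{-1}\circ f\circ h=f$. Since conversely every element of $\mathcal{H}(f)$ is by definition a homography fixing $0$, the group $\mathcal{H}(f)$ coincides with the full set of homographies fixing the origin, which is infinite, completing the equivalence. The main obstacle is the rigorous passage to the quadratic-differential invariance and the finiteness squeeze: one must handle the multiplier part (the root-of-unity condition on $h'(0)$) and the unipotent part (the vanishing of $\beta$ on $\ker\lambda$) separately, the latter requiring the genuine second-order Taylor comparison above, and one must make sure the transformation law for $\mathcal{S}$ used here is the classical one.
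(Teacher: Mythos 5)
Your proof is correct, and it organizes the argument genuinely differently from the paper. The common core is the translation of the symmetry relation $h_{1}\circ\sigma\circ h=\sigma$ into the functional equation $\phi(z)=\phi\bigl(h(z)\bigr)h'(z)^{2}$ for the Schwarzian $\phi=\mathcal{S}(\sigma)$ (the paper's $f$); both proofs start there. But the paper then runs an element-by-element case analysis: (i) a symmetry whose multiplier is not a root of unity forces $\phi\equiv0$ by induction on Taylor coefficients; (ii) a nontrivial parabolic symmetry forces $\phi\equiv0$ by a similar induction; (iii) a non-abelian group contains a nontrivial element tangent to the identity, reducing to (ii); and (iv) the residual case of an infinite abelian torsion group requires a genuinely global argument --- conjugating into the linear maps, passing to the coordinate at infinity, taking the double primitive $U$ of $z^{-4}\phi(1/z)$ normalized by $U(\infty)=0$, deriving the invariance $U(az+b)=U(z)$, and invoking the dynamics of an infinite group of rotations to get $U=\mbox{cst}$, hence $\phi=0$. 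You replace (i), (iii) and (iv) by one uniform algebraic observation: if $\phi\not\equiv0$ vanishes to order $k$ at the origin, the lowest-order coefficient of the functional equation forces $\lambda^{k+2}=1$ for every multiplier $\lambda=h'(0)$, so the multiplier homomorphism has image in the finite group $\mu_{k+2}$; your kernel computation (the $z^{k+1}$ coefficient for a parabolic element, giving $(k+4)\beta\phi_{0}=0$) is exactly the paper's case (ii) in contrapositive form, and elementary group theory then gives $\left|\mathcal{H}(f)\right|\leq k+2$. This buys a lot: you avoid entirely the paper's most delicate step (iv), you obtain the sharper quantitative conclusion that $\mathcal{H}(f)$ embeds into $\mu_{k+2}$ (hence is cyclic of order dividing $k+2$, which refines the subsequent corollary where $H$ is only asserted to be finite and generically trivial), and you explicitly prove the second assertion of the lemma --- that when $\sigma$ is a homography, $\mathcal{H}(f)$ is the full group of homographies fixing $0$ --- which the paper's proof leaves implicit. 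What the paper's analysis buys instead is per-element information: a single symmetry of type (i) or (ii) already kills $\phi$, with no hypothesis that $\mathcal{H}(f)$ is infinite; your argument needs the group structure and the multiplier homomorphism, both of which you correctly establish at the outset.
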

\begin{proof}
The relation (\ref{eq:Homo}) is equivalent to the functionnal equation
\[
f\circ h\left(z\right)=\frac{1}{\left(h^{'}\right)^{2}}f\left(z\right)
\]
where $f=S\left(\sigma\right)$ is the Schwartzian derivative of $\sigma$.
Let us write $h\left(z\right)=\frac{z}{a+bz}$ and $f\left(z\right)=\sum_{n\geq1}f_{n}z^{n}.$
\begin{enumerate}
\item Suppose that $h^{'}\left(0\right)$ is not a root of unity. Then applying
the above relation at $z=0$ leads to $f\left(0\right)=0$. Now, we
have 
\begin{eqnarray*}
a^{2}\sum_{n\geq1}f_{n}\frac{z^{n}}{\left(a+bz\right)^{n}} & = & \left(a+bz\right)^{4}\sum_{n\geq1}f_{n}z^{n}.
\end{eqnarray*}
An easy induction on $n$ show that for any $n$ $f_{n}=0$, thus
$f=0$ and $\sigma$ is an homography. 
\item Suppose now $h^{'}\left(0\right)=1$ then
\begin{eqnarray*}
\sum_{n\geq0}f_{n}\frac{z^{n}}{\left(1+bz\right)^{n}} & = & \left(1+bz\right)^{4}\sum_{n\geq0}f_{n}z^{n}
\end{eqnarray*}
Suppose that $b\neq0.$ If for any $n\leq N-1$ we have $f_{n}=0$,
let us have a look at the terms in $x^{N+1}$ in the above equality.
It is 
\[
-Nbf_{N}+f_{N+1}=4bf_{N}+f_{N+1}
\]
Thus $f_{N}=0.$ Which, proves by induction that $f$ still is equal
to zero. 
\item If $\mathcal{H}\left(f\right)$ is infinite, suppose it admits two
elements $h$ and $g$ that did not commute, then $\left[h,g\right]$
is tangent to $Id$ but is not the $Id$ . Thus using the above computation,
$f=0$. 
\item Finally, if $h^{'}\left(0\right)$ is a root of unity, it is easly
seen that $h^{\circ\left(n\right)}=\mbox{Id}$ where $n$ is the smallest
integer such that $h^{'}\left(0\right)^{n}=1.$ Thus, suppose that
the group $\mathcal{H}\left(f\right)$ is abelian and any element
of finite order. We have an embedding 
\[
\mathcal{H}\left(f\right)\longrightarrow\mbox{Aff}\left(\mathbb{C}\right)
\]
since, the sole element tangent to $\mbox{Id}$ is the identity itself.
Therefore, $\mathcal{H}\left(f\right)$ can be seen as abelian subgroup
of $\mbox{Aff}\left(\mathbb{C}\right)$. Hence, the group has a fix
point and can be seen as a subgroup of the linear transformations
of $\mathbb{C}$. Now let us write the relation on the Schwartzian
seen at $\infty$
\[
f\left(1/\left(1/h\left(1/z\right)\right)\right)=\frac{1}{h^{'}\left(\frac{1}{z}\right)^{2}}f\left(\frac{1}{z}\right).
\]
Setting, $u\left(z\right)=\frac{1}{z^{4}}f\left(\frac{1}{z}\right)$
yields $u\left(az+b\right)=\frac{1}{a^{2}}u\left(z\right).$ Since,
$u=\frac{\alpha}{z^{4}}+\cdots$ we can consider the double primitive
function $U=\iint u$ with $U\left(\infty\right)=0$. This is a univalued
holomorphic function defined near $\infty$. Finally, the function
$U$ satisfies the following functionnal relation 
\[
U\left(az+b\right)=U\left(z\right).
\]
But in view of the dynamics of $\mbox{Lin}\left(\mathbb{C}\right)$,
it is clear that if $\mathcal{H}\left(f\right)$ is infinite then
$U=\mbox{cst}$ and thus $u=0$. 
\end{enumerate}
\end{proof}
In the course of the proof of the above result, we obtain the following
result
\begin{cor}
Let $\mathcal{M}$ be the quotient of $\mathbb{C}$ by the relation
$\alpha\sim\alpha^{'}$ if and only if $\mathcal{F}_{\alpha,\sigma}\sim\mathcal{F}_{\alpha^{'},\sigma}$
then there is only two possibilities 
\begin{enumerate}
\item $\mathcal{M}=\left\{ 0\right\} $ when $\sigma$ is an homography
- $\mathcal{F}_{\alpha,\sigma}$ is then analytically to $\frac{y^{2}+x^{3}}{xy}.$
\item $\mathcal{M}=\EQ{\mathbb{C}}H$ where $H$ is a finite subgroup of
$\mbox{Aff}\left(\mathbb{C}\right)$. 
\end{enumerate}

Genrerically, $H$ is reduced to $\left\{ \mbox{Id}\right\} $.

\end{cor}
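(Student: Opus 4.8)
The plan is to read the description of $\mathcal{M}$ directly off Proposition~\ref{prop:Two-normal-forms}, specialized to the case of a \emph{fixed} transverse structure, and then to recognize the resulting relation on the Mattei parameter as an orbit relation under a subgroup of $\mathrm{Aff}(\mathbb{C})$ governed by $\mathcal{H}(f)$, where $f=\mathcal{S}(\sigma)$. First I would apply Proposition~\ref{prop:Two-normal-forms} with $\gamma=\sigma$: the foliations $\mathcal{F}_{\alpha,\sigma}$ and $\mathcal{F}_{\alpha',\sigma}$ are conjugated if and only if there exist homographies $h_0,h_1$ fixing the origin with
\[
\sigma = h_1\circ\sigma\circ h_0,\qquad
\frac{2}{5}\left(\alpha-\frac{3}{2}\frac{\sigma''(0)}{\sigma'(0)}\right)
= \frac{2}{5}\left(\alpha'-\frac{3}{2}\frac{\sigma''(0)}{\sigma'(0)}\right)h_0'(0)-\frac{h_0''(0)}{h_0'(0)}.
\]
The first equation is precisely the statement that $h_0$ is a homographic symmetry of $\sigma$, i.e. $h_0\in\mathcal{H}(f)$. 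Introducing the affine coordinate $\beta=\frac{2}{5}\left(\alpha-\frac{3}{2}\frac{\sigma''(0)}{\sigma'(0)}\right)$, a biholomorphism of $\mathbb{C}$, the second equation reads $\beta = h_0'(0)\,\beta' - h_0''(0)/h_0'(0)=:T_{h_0}(\beta')$.

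Next I would verify that $h_0\mapsto T_{h_0}$ is a group morphism from $\mathcal{H}(f)$ into $\mathrm{Aff}(\mathbb{C})$. A direct chain-rule computation, using $h_0(0)=0$, gives $a(g\circ h)=a(g)a(h)$ and $b(g\circ h)=b(g)a(h)+b(h)$ with $a(h)=h'(0)$, $b(h)=-h''(0)/h'(0)$, whence $T_{g\circ h}=T_h\circ T_g$; so the image $H:=\{T_{h_0}:h_0\in\mathcal{H}(f)\}$ is a subgroup of $\mathrm{Aff}(\mathbb{C})$. Moreover $T_{h_0}=\mathrm{Id}$ forces $h_0'(0)=1$ and $h_0''(0)=0$, hence $h_0=\mathrm{Id}$, so $T$ is injective and $H\cong\mathcal{H}(f)$. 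Since $\mathcal{H}(f)$ is itself a group (closure under composition and inverse follows by substituting one symmetry relation into another), the displayed condition says exactly that $\alpha\sim\alpha'$ if and only if $\beta$ and $\beta'$ lie in the same $H$-orbit. Through the coordinate $\beta$ this identifies $\mathcal{M}$ with $\mathbb{C}/H$.

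The two alternatives then follow from the preceding lemma on $\mathcal{H}(f)$. If $\sigma$ is a homography, then $f=\mathcal{S}(\sigma)=0$ and $\mathcal{H}(f)$ is infinite, hence by the lemma it is the full group of homographies fixing $0$; writing $h_0(z)=z/(c+dz)$ one has $(h_0'(0),-h_0''(0)/h_0'(0))=(1/c,2d/c)$, which sweeps out $\mathbb{C}^*\times\mathbb{C}$, so $H=\mathrm{Aff}(\mathbb{C})$ acts transitively on $\mathbb{C}$ and $\mathcal{M}$ is a single point, which I denote $\{0\}$. The unique foliation is then analytically equivalent to $\frac{y^2+x^3}{xy}$ by Proposition~\ref{prop:Let--be} together with the fact, established later in the article, that its two normal forms $(1)$ and $(2)$ coincide analytically. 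If $\sigma$ is not a homography, the contrapositive of the lemma (infinite $\Rightarrow$ homography) yields that $\mathcal{H}(f)$ is finite; hence $H$ is a finite subgroup of $\mathrm{Aff}(\mathbb{C})$ and $\mathcal{M}=\mathbb{C}/H$. The genericity assertion amounts to the observation that admitting a nontrivial homographic symmetry is a highly nongeneric constraint on $\sigma$, so that $\mathcal{H}(f)=\{\mathrm{Id}\}$ and $H=\{\mathrm{Id}\}$ generically.

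The substance of the argument has already been carried out in Proposition~\ref{prop:Two-normal-forms} and in the preceding lemma, so the only genuine verifications left are the morphism and injectivity properties of $h_0\mapsto T_{h_0}$, which let $H$ inherit its group structure and its finiteness from $\mathcal{H}(f)$, and the transitivity of the affine action in the homographic case. I expect this last point --- checking that the pair $(h_0'(0),-h_0''(0)/h_0'(0))$ actually exhausts all of $\mathrm{Aff}(\mathbb{C})$ as $h_0$ ranges over homographies fixing the origin --- to be the main, though entirely elementary, obstacle.
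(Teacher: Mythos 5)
Your reduction of the corollary to Proposition \ref{prop:Two-normal-forms} specialized to $\gamma=\sigma$, combined with the lemma on $\mathcal{H}(f)$, is exactly the argument the paper leaves implicit (``in the course of the proof of the above result''), and the details you supply are correct and are the right ones to check: the map $h_0\mapsto T_{h_0}$ is injective and satisfies $T_{g\circ h}=T_h\circ T_g$ (so it is an anti-morphism rather than a morphism, as your own composition law $a(g\circ h)=a(g)a(h)$, $b(g\circ h)=b(g)a(h)+b(h)$ shows --- but the image $H$ is a subgroup either way, and finiteness of $\mathcal{H}(f)$ still transfers to $H$); and for homographic $\sigma$ the pairs $\left(h_0'(0),-h_0''(0)/h_0'(0)\right)=\left(1/c,2d/c\right)$ do exhaust $\mathrm{Aff}(\mathbb{C})$, so the action is transitive and $\mathcal{M}$ is a point.

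The one genuine weak point is your justification of the sub-claim that, for homographic $\sigma$, the unique class is that of $\frac{y^2+x^3}{xy}$. You invoke Proposition \ref{prop:Let--be} together with ``the fact, established later in the article, that its two normal forms $(1)$ and $(2)$ coincide analytically.'' But in the paper that fact is obtained only \emph{as a consequence of this very corollary} (``As an obvious consequence, the functions obtained in proposition \ref{prop:Let--be} define two foliations analytically equivalent''), so as written your proof cites a downstream consequence of the statement being proved. The circularity is benign and repairable, because the equivalence of $(1)$ and $(2)$ needs only the part you have already established at that point: both functions have homographic transversal structure, hence by the normal form theorem they are equivalent to normal forms $\mathcal{F}_{\gamma_i,\alpha_i}$ with $\gamma_i$ homographic, and your transitivity argument (choose $h_0$ solving the affine relation, then $h_1=\sigma\circ h_0^{-1}\circ\gamma^{-1}$) conjugates any two such normal forms. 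Better still, you can bypass Proposition \ref{prop:Let--be} entirely: the foliation of $\frac{y^2+x^3}{xy}$ has transversal structure $\mathrm{Id}$, so its normal form $\mathcal{F}_{\gamma,\alpha_0}$ has $\gamma$ a homography (conjugacy changes the transversal structure only by pre- and post-composition with homographies, as in Step 1 of Proposition \ref{prop:Two-normal-forms}), and then the same transitivity argument conjugates $\mathcal{F}_{\alpha,\sigma}$ to it directly. You should state one of these repairs explicitly; with that amendment the proof is complete.
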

As an obvious consequence, the functions obtained in proposition (\ref{prop:Let--be})
define two foliations analytically equivalent.

\subsection{Toward a geometric description of the moduli of Mattei. }

It remains to give a geometric interpretation of the parameter $\alpha$.
A promising approach is the following. Near the singular point of
the divisor, the leaf is conformally equivalent to a disc minus two
points which are the intersections between the leaf and the exceptionnal
divisor. If we consider in the leaf a path linking this two points,
we obtain after taking the image of this path by $E,$ an \emph{asymptotic
cycle} $\gamma$ as defined in \cite{Teyssier}which is not topologically
trivial. 

\noindent \begin{center}
\includegraphics[scale=0.5]{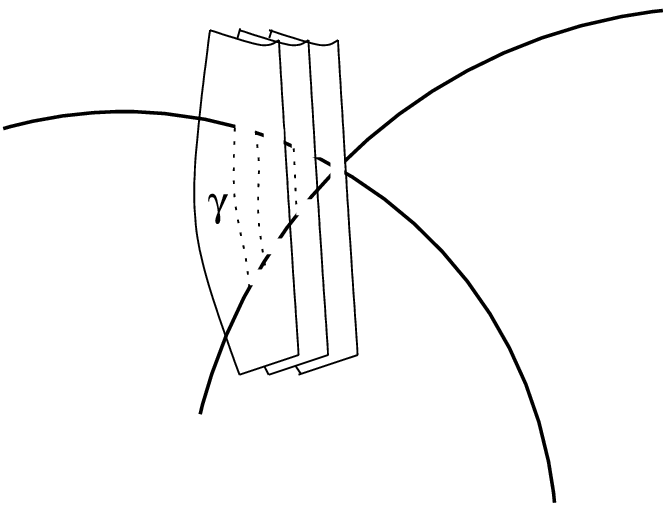}
\par\end{center}

Therefore, considering the family of these cycles parametrized by
a transversal parameter to the foliation yields a vanishing asymptotic
cycle. We claim that the moduli of Mattei should be associated to
the \emph{length} of this vanishing asymptotic cycle: more precisely,
it should be computed by the integral of some form along this vanishing
cycle. Actually, it easy to prove the following: let $\omega$ be
a $1-$form defining an absolutely dicritical foliation of cusp type
and let $\eta$ be any germ of $1$ form. Then $\eta$ is relatively
exact with respect to $\omega$, i.e., there exist two germs of holomorphic
functions $f$ and $g$ such that 
\[
\eta=df+g\omega
\]
if and only if the integral of $\eta$ along any asymptotic cycle
$\gamma$ vanish. Thus, we think that in a sense that has to be worked
out, the moduli of Mattei should be computed by the integral of some
generator of the relative cohomology group of $\omega$ along the
asymptotic vanishing cycle. 

\bibliographystyle{plain}
\bibliography{Bibliographie,biblio,dossier_scientifique}

\end{document}